\newcommand{\R}{\mathbb{R}}
\newtheorem{theo}{Theorem}[section]
\newtheorem{coro}[theo]{Corollary}
\newtheorem{lemma}[theo]{Lemma}
\newtheorem{prop}[theo]{Proposition}
\theoremstyle{definition}
\DeclareMathOperator{\trace}{trace}
\DeclareMathOperator{\dist}{dist}
\newcommand{\cA}{{\mathcal A}}   
\newcommand{\cC}{{\mathcal C}}
\newcommand{\cR}{{\mathcal R}}
\newcommand{\cU}{{\mathcal U}}
\renewcommand{\phi}{\varphi}
\renewcommand{\epsilon}{\varepsilon}
\numberwithin{equation}{section} 
\renewcommand{\sectionmark}[1]{\markboth{\MakeUppercase{#1}}{}}
\renewcommand{\sectionmark}[1]{\markright{\MakeUppercase{#1}}{}}
\title{Asymptotic axial symmetry of solutions of parabolic equations in bounded radial domains}
\author{Alberto Salda\~{n}a\footnote{Institut f\"{u}r Mathematik, Johann Wolfgang Goethe-Universit\"{a}t Frankfurt, Robert-Mayer-Str. 10, D-60054 Frankfurt, saldana@math.uni-frankfurt.de.}  \ \ \  \& \ \ \ Tobias Weth\footnote{Institut f\"{u}r Mathematik, Johann Wolfgang Goethe-Universit\"{a}t Frankfurt, Robert-Mayer-Str. 10, D-60054 Frankfurt, weth@math.uni-frankfurt.de.} }
\date{}
\begin{document}
\maketitle

\begin{abstract}
We consider solutions of some nonlinear parabolic boundary value
problems in radial bounded domains whose initial profile satisfy a
reflection inequality with respect to a hyperplane containing the
origin. We show that, under rather general assumptions, these
solutions are asymptotically (in time) foliated Schwarz symmetric,
i.e., all elements in the associated omega limit set are axially
symmetric with respect to a common axis passing through the origin and
nonincreasing in the polar angle from this axis. In this form, the
result is new even for equilibria (i.e. solutions of the corresponding
elliptic problem) and time periodic solutions.
\end{abstract}

Mathematics Subject Classification (2010): 35B40, 35B30

Keywords: Rotating plane method, foliated Schwarz symmetry, asymptotic symmetry, omega limit set.

\section{Introduction}
Consider the reaction-diffusion problem 
\begin{equation}
\begin{aligned}
\label{(P-simple)}
 &u_t = \Delta u +f(t,|x|,u), && \qquad (x,t) \in (0,\infty)\times B,\\
    &u(x,t) = 0,  && \qquad (x,t)\in \partial B\times (0,\infty),\\
   &u(x,0) = u_0(x), && \qquad x\in B,
\end{aligned}
\end{equation}
where $B$ is a bounded radial domain in $\mathbb R^N,\ N\geq 2$, i.e.,
a ball or an annulus in $\R^n$ centered at zero. If
the nonlinearity $f$ is continuous and locally Lipschitz in
$u$ uniformly in $t$ and $|x|$, it follows from standard semigroup theory that, for every $u_0
\in C(\overline B)$, the corresponding local (in time) problem
admits a unique solution $u \in C(\overline B \times [0,T(u_0)))$ for some
time $T(u_0)>0$. Moreover, it has been studied extensively in
recent years under which assumptions on the nonlinearity $f$ and the initial condition $u_0$ this
unique solution exists globally in time and 
\begin{equation}
  \label{eq:1}
\text{the corresponding orbit
$\{u(\cdot,t)\::\: t >0\}$ is relatively compact in $C(\overline
B)$.}  
\end{equation}
 We refer the reader to
\cite{quittner-souplet,cazenave-haraux,polacik} and the references
therein, where many specific examples are discussed which give rise to
this behavior. It is then natural to investigate the qualitative
asymptotic behaviour of these global solutions. In this paper, we are
mainly inspired by work of Pol\'{a}\v{c}ik \cite{polacik} who studied
asymptotic symmetry of these solutions. In particular,
he proved that, if $B$ is a ball, $f$ is nonincreasing in $|x|$ and
if $f$ satisfies some rather
mild regularity assumptions, every nonnegative solution of
(\ref{(P-simple)}) satisfying (\ref{eq:1}) is asymptotically radially
symmetric. More precisely, every function $z$ belonging to the omega limit set 
\begin{equation*}
 \omega(u):=\{z\in C(\overline{B}) :
 \|u(\cdot,t_n)-z\|_{L^\infty(B)} \to 0 \text{ for some }
 t_n\to\infty\}
\end{equation*}
is radially symmetric and decreasing in the radial variable (see
\cite[Corollary 2.6]{polacik}). This result is proved via a parabolic
version of the moving plane method relying on
subtle estimates on solutions to linear parabolic equations.
We recall that the moving plane method has its roots in earlier work
of Alexandrov \cite{alex} and Serrin \cite{serrin} for geometric problems and has been elaborated in the
seminal paper of Gidas, Ni and Nirenberg \cite{GNN}
in order to prove symmetry results for solutions of elliptic nonlinear
boundary problems.\\
The motivation of the present paper is that, to our
knowledge, so far no asymptotic symmetry result is available for
sign-changing solutions of (\ref{(P-simple)}) and, if $B \subset \R^n$
is an annulus or $f$ is increasing in $|x|$, also for nonnegative
solutions. In fact, in these situations, equilibrium solutions of
(\ref{(P-simple)}) in the case where $f=f(|x|,u)$ does not depend on $t$
may already have a
very complicated shape. In particular,
 for suitable data, solutions with arbitrarily many isolated local
 maxima close to the boundary have been constructed, see \cite{clapp-delPino-musso,peng,pistoia-serra}. 
Therefore any type of symmetry result in this setting requires
additional assumptions on the initial profile $u_0$. In this paper we
assume a simple reflection inequality with respect to a
hyperplane. In order to state this assumption and our symmetry result, we need to introduce some
notation. Let $S=\{x\in\mathbb R^N: |x|=1\}$ be the unit sphere in
$\R^N$.  For a vector $e\in S$, 
we consider the hyperplane $H(e):=\{x\in \mathbb R^N: x\cdot e=0\}$
and the half domain $B(e):=\{x\in B: x\cdot e>0\}.$  We write also
$\sigma_e:B\to B$ to denote reflection with respect to $H(e),$
i.e. $\sigma_e(x):=x-2(x\cdot e)e$ for each $x\in B.$ We say that a
function $u\in C(B)$ is \textit{foliated Schwarz symmetric with
respect to some unit vector $p\in S$} if $u$ is axially symmetric with
respect to the axis $\mathbb R p$ and nonincreasing in the polar angle
$\theta:= \operatorname{arccos}(\frac{x}{|x|}\cdot p)\in [0,\pi].$ The name {\em foliated Schwarz symmetry} was introduced in
\cite{smets} by Smets and Willem, and it is also called ``codimension
one symmetry'' or ``cap symmetry'' by other authors.  
We refer the reader to the survey article \cite{wethsurvey} and the
references therein for a
broader discussion on symmetry properties of this type and its
relationship to reflection inequalities. Finally we set 
$$
I:= \overline{\{|x|\::\:x \in B\}}.
$$ 
The following is our main result for problem (\ref{(P-simple)}).
\begin{theo}
\label{maintheo-simple}
Suppose that 
\begin{itemize}
\item [(f1)] the nonlinearity $f: [0,\infty) \times I \times \R \to
\R$, $(t,r,u) \mapsto f(t,r,u)$ is continuous in $t,r$ and
locally Lipschitz in $u$ uniformly with respect to $t$ and $r$, i.e. for every $K>0$
there is $L=L(K)>0$ such that 
$$
|f(t,r,u_1)-f(t,r,u_2)| \le L |u_1-u_2|
$$
for all $(t,r) \in [0,\infty) \times I$ and $u_1,u_2 \in [-K,K]$.
\item[(f2)] $f(\cdot,\cdot,0)$ is bounded on $[0,\infty) \times I$.
\end{itemize}
Assume furthermore that $u\in C^{2,1}(B\times (0,\infty))\cap
C(\overline B\times[0,\infty))$ is a classical solution of
(\ref{(P-simple)}) such that  
\begin{itemize}
\item [(U1)] there is $e\in S$ such that $u_0\geq u_0\circ\sigma_e$ and $u_0\not\equiv u_0\circ\sigma_e$ in $B(e).$
\item [(U2)] $\|u(\cdot,t)\|_{L^\infty(B)}$ is uniformly bounded in $t.$
\end{itemize}
Then $u$ is asymptotically foliated Schwarz symmetric with
respect to some $p \in S$, i.e. all elements in $\omega(u)$ are
foliated Schwarz symmetric with respect to $p$.
\end{theo}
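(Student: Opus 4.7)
The plan is to extend the rotating plane method to the asymptotic parabolic setting, in the spirit of the parabolic moving plane technique of Pol\'a\v{c}ik. The starting point is that, for each unit vector $e'\in S$, the reflection difference $w_{e'}(x,t):=u(x,t)-u(\sigma_{e'}(x),t)$ satisfies on $B(e')\times(0,\infty)$ a linear parabolic equation
\[
\partial_t w_{e'} - \Delta w_{e'} = c_{e'}(x,t)\,w_{e'},
\]
whose coefficient $c_{e'}$ is bounded in view of (U2) and the local Lipschitz assumption in (f1); crucially, the radial dependence $f(t,|x|,u)$ makes the nonlinear contributions cancel across the reflection. Dirichlet conditions on $\partial B$ combined with the reflection identity on $H(e')$ give $w_{e'}(\cdot,t)=0$ on $\partial B(e')$.

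From the parabolic maximum principle, the reflection inequality in (U1) is preserved for all $t\geq 0$, and the strong maximum principle sharpens this to $w_e>0$ on $B(e)\times(0,\infty)$. I would then introduce
\[
\cM := \{e'\in S : w_{e'}(\cdot,t)\geq 0 \text{ on } B(e') \text{ for all } t\geq 0\},
\]
which is nonempty, closed in $S$ by continuity of $u$, and inherited by every $z\in\omega(u)$ in the form $z\geq z\circ\sigma_{e'}$ on $B(e')$. A standard characterization (see~\cite{wethsurvey}) states that $z\in C(\overline B)$ is foliated Schwarz symmetric with respect to $p\in S$ if and only if $z\geq z\circ\sigma_{e'}$ on $B(e')$ for every $e'$ with $e'\cdot p\geq 0$. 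The theorem thus reduces to producing a single $p\in S$ whose closed hemisphere $\{e'\cdot p\geq 0\}$ lies in $\cM$.

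For the rotating plane step, consider continuous rotations of the initial direction $e$ along great circles in $S$. Since $\cM$ is closed, one can define the maximal rotation and obtain a boundary direction $e^*\in\partial \cM$. The key dichotomy to establish is: either the reflection inequality at $e^*$ can be perturbed continuously, which contradicts $e^*\in\partial\cM$, or one has the identity $z\equiv z\circ\sigma_{e^*}$ on $B(e^*)$ for every $z\in\omega(u)$. In the second case, this rigidity combined with the strict inequality at $e$ yields additional reflective symmetries of every $z\in\omega(u)$. A geometric argument, iterating this dichotomy along well-chosen great-circle rotations, should ultimately force $\cM$ to contain a closed hemisphere and thereby identify the common axis $p$.

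The central obstacle is the rigidity step for elements of $\omega(u)$, which do not solve a single parabolic equation, so that the strong maximum principle does not apply directly to them. I would handle this by combining (U2) with parabolic interior regularity to obtain relative compactness of the orbit in $C(\overline B)$, allowing every $z\in\omega(u)$ to be realized as the value $U(\cdot,0)$ of an entire bounded trajectory $U:\R\to C(\overline B)$ obtained as a locally uniform limit of time-translates of $u$ along a diagonal subsequence. Each such $U$ satisfies a limiting parabolic equation whose nonlinearity still obeys (f1)--(f2), and the reflection differences $W_{e'}:=U-U\circ\sigma_{e'}$ solve linear parabolic equations with bounded coefficients on all of $\R\times B(e')$. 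On these entire trajectories the strong maximum principle and Hopf-type boundary estimates deliver exactly the ``strict inequality or identity'' dichotomy required by the rotating step, closing the argument.
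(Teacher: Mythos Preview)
Your outline correctly isolates the linear equation satisfied by $w_{e'}$ and the rotating-plane strategy, but the implementation has a genuine gap at two closely related places.

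First, your set
\[
\cM = \{e'\in S : w_{e'}(\cdot,t)\geq 0 \text{ on } B(e') \text{ for all } t\geq 0\}
\]
is too small to rotate. Hypothesis (U1) constrains $u_0$ only along the single direction $e$; for any perturbed direction $e'$ the initial difference $u_0-u_0\circ\sigma_{e'}$ will typically be negative somewhere in $B(e')$, so $e'\notin\cM$. Your $\cM$ may thus collapse to $\{e\}$, and the rotation never gets started. The paper instead works with the \emph{asymptotic} set
\[
\cM=\{e'\in S: z\ge z\circ\sigma_{e'}\text{ in }B(e')\text{ for every }z\in\omega(u)\},
\]
and proves (Lemma~\ref{teopohob}) that this $\cM$ contains a full neighborhood of $e$. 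The point is that for $e'$ close to $e$ the function $w_{e'}$ starts with a small negative part, and one must show this negative part tends to $0$ as $t\to\infty$. This is done via Pol\'a\v{c}ik's quantitative stability estimate (Theorem~\ref{polacikthm}); there is no purely qualitative substitute.

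Second, the same obstruction recurs at the boundary direction $e^*$. Even with the asymptotic $\cM$, suppose $e^*\in\cM$ and some $\tilde z\in\omega(u)$ has $\tilde z_{e^*}\not\equiv 0$; to advance the rotation you need $z_{e'}\geq 0$ for \emph{every} $z\in\omega(u)$ and every nearby $e'$. Your entire-trajectory argument yields strict positivity along the orbit through $\tilde z$, but says nothing about an unrelated $z'\in\omega(u)$ with $z'_{e^*}\equiv 0$: for such $z'$ the limiting reflection difference vanishes identically, and perturbing the hyperplane can create a negative part. The paper's Lemma~\ref{lobolemab} closes this uniformly by going back to the original orbit $u$: a Harnack-type bound (Lemma~\ref{lemapol}) produces a quantitative lower bound for $w_{e^*}$ at some large time, which is then fed into Theorem~\ref{polacikthm} to force $\|w_{e'}^-(\cdot,t)\|_{L^\infty}\to 0$ for all nearby $e'$, thereby controlling every $z\in\omega(u)$ at once. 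The strong maximum principle on individual entire trajectories does not give this uniformity. (A smaller side issue: since $f$ depends on $t$ with no almost-periodicity assumed, the ``limiting parabolic equation'' for $U$ is not well defined as a nonlinear equation; only the linear equation for $W_{e'}$, with merely bounded measurable coefficients obtained by weak-$*$ limits, survives.)
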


Indeed, we will prove a more general version of this result in Section~\ref{sec:framework} below, dealing with a more general class of evolution
problems similarly as in \cite{polacik}. An immediate corollary of
Theorem~\ref{maintheo-simple} is the following.

\begin{coro}
\label{maincoro-simple}
\begin{itemize}
\item[(i)] Let $f:I \times \R \to
\R$, $(r,u) \mapsto f(r,u)$ be continuous in $r \in I$ and
locally Lipschitz in $u$ uniformly with respect to $r$. Moreover, let $u \in
C(\overline B) \cap C^2(B)$ be a classical solution of the elliptic
problem  
\begin{equation}
\begin{aligned}
\label{(P-simple-elliptic)}
-\Delta u &= f(|x|,u), && \qquad \text{in $B$},\\
    u(x) &= 0,  && \qquad \text{on $\partial B$},
\end{aligned}
\end{equation}
such that (U1) holds for $u$ in place of $u_0$. Then $u$ is foliated
Schwarz symmetric with respect to some $p \in S$. 
\item[(ii)] Suppose that $f:[0,\infty) \times I \times \R \to
\R$ satisfies (f1) and is periodic in $t$, i.e. there is $T>0$
such that $f(t+T,r,u)=f(t,r,u)$ for all $t,r,u$. Suppose furthermore
that $u$ is a $T$-periodic solution of (\ref{(P-simple)}), i.e.,
$u(x,t+T)=u(x,t)$ for all $x\in B, t \in [0,\infty)$,
and such that (U1) holds. Then $u(\cdot,t)$ is foliated
Schwarz symmetric with respect to some $p \in S$ for all times $t \in [0,\infty)$. 
\end{itemize}
\end{coro}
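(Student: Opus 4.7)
The plan is to deduce both parts of Corollary~\ref{maincoro-simple} from Theorem~\ref{maintheo-simple} by viewing the elliptic/periodic solutions as special solutions of the parabolic problem~(\ref{(P-simple)}) and then identifying the omega limit set explicitly.

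For part (i), I would regard the elliptic solution $u$ as a $t$-independent classical solution of the parabolic problem~(\ref{(P-simple)}) with $f(t,r,u):=f(r,u)$ and initial datum $u_0:=u$. Assumption (f1) for this $t$-independent nonlinearity is immediate from the hypotheses on $f(r,u)$. For (f2), I need $f(\cdot,0)$ to be bounded on $I$, which follows because $f$ is continuous in $r$ and $I$ is compact. Condition (U1) is precisely the hypothesis imposed on $u$, and (U2) holds trivially since $u$ is a stationary solution. Theorem~\ref{maintheo-simple} then yields a $p\in S$ such that every element of $\omega(u)$ is foliated Schwarz symmetric with respect to $p$. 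Since $u(\cdot,t)\equiv u$ for all $t$, one has $\omega(u)=\{u\}$, and the conclusion follows.

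For part (ii), I would again apply Theorem~\ref{maintheo-simple} to $u$ with initial datum $u_0:=u(\cdot,0)$. Assumption (f1) is given, and (f2) holds because the $T$-periodicity together with continuity in $(t,r)$ makes $f(\cdot,\cdot,0)$ bounded on $[0,T]\times I$ and hence on $[0,\infty)\times I$. Since $u$ is $T$-periodic and $u\in C(\overline B\times[0,\infty))$, the orbit $\{u(\cdot,t):t\geq 0\}$ equals the continuous image of the compact set $[0,T]$ in $C(\overline B)$, hence is compact, and in particular (U2) holds. Theorem~\ref{maintheo-simple} provides $p\in S$ such that every $z\in\omega(u)$ is foliated Schwarz symmetric with respect to $p$. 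The key observation is then that for a $T$-periodic solution,
\begin{equation*}
\omega(u)=\{u(\cdot,s):s\in[0,T]\},
\end{equation*}
which one sees by writing any $t_n\to\infty$ as $t_n=k_nT+s_n$ with $s_n\in[0,T)$, extracting a subsequence with $s_n\to s$, and using uniform continuity of $u$ on $\overline B\times[0,T]$ to obtain $u(\cdot,t_n)=u(\cdot,s_n)\to u(\cdot,s)$ uniformly on $\overline B$. Consequently, $u(\cdot,t)$ is foliated Schwarz symmetric with respect to the common axis $\mathbb{R}p$ for every $t\geq 0$.

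There is no real obstacle here; this is a routine corollary. The only subtlety is the identification $\omega(u)=\{u(\cdot,s):s\in[0,T]\}$ in part (ii), which ensures that the \emph{common} axis $p$ delivered by Theorem~\ref{maintheo-simple} works simultaneously for every time slice—this is the whole point of asking for a single $p\in S$ that serves all elements of the omega limit set, rather than a $p$ depending on the limit.
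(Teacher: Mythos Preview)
Your proposal is correct and matches the paper's own treatment: the paper states that Corollary~\ref{maincoro-simple} is ``an immediate corollary of Theorem~\ref{maintheo-simple}'' and provides no further argument, so your explicit verification of (f1), (f2), (U1), (U2) and identification of $\omega(u)$ in each case is precisely the intended (and only) route. The observation that $\omega(u)=\{u(\cdot,s):s\in[0,T]\}$ in part~(ii) is exactly the point needed to conclude that a single $p$ works for all time slices.
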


Both parts of this corollary are new. Under additional spectral assumptions on
the solution, statements similar to part (i) have been derived 
in \cite{pacella,gladiali} as an intermediate step in the proof of
symmetry results for solutions of (\ref{(P-simple-elliptic)}) with low
Morse index. For time periodic solutions as considered in (ii), no
previous symmetry result seems
to be available in the present setting. We note that results on radial symmetry of nonnegative time periodic solutions
had been obtained by Dancer and Hess \cite{dancer.hess:94} in the setting where
$B$ is a ball in $\R^N$ and $f$ is nonincreasing in $|x|$. 

We note
that an easy example giving rise to a (sign changing) nonradial but
foliated Schwarz symmetric solution of (\ref{(P-simple-elliptic)})
-- and thus also of (\ref{(P-simple)}) -- is given by $f(|x|,u)=
\lambda_2 u$, where $\lambda_2$ is the second Dirichlet eigenvalue
of the Laplacian. It is known that every corresponding eigenfunction
is of the form $u(x)= j(|x|)\frac{x \cdot p}{|x|}$ for some $p \in S$
and some positive function $j$ on $I$, so $u$ is obviously nonradial but
foliated Schwarz symmetric with respect to $p$.

Our approach to prove (a more general version of)
Theorem~\ref{maintheo-simple} is by a rotating plane argument, which
should be seen as a variant of the moving plane method. The hypothesis
(U1) will allow us to start this method. In contrast to
the usual moving plane method on bounded domains in the form developed
in \cite{GNN} for elliptic and in \cite{polacik} for parabolic problems, the symmetry axis is not fixed a priori by assumption
(U1). Moreover, the rotating plane method alone only gives rise to
local monotonicity with respect to every (cylindrical) angle. An extra argument is needed to
translate this information into foliated Schwarz symmetry, see
Proposition~\ref{sec:char-foli-schw-1} below. Note also that assumption (U1)
does not imply that the functions in $\omega(u)$ are strictly
decreasing in the polar angle from the symmetry axis. For instance, in
case $B$ is a ball, $f$ is decreasing in $|x|$ and $u_0 \in C(\overline B)$ is a nonnegative function satisfying (U1), the
above-mentioned result \cite[Corollary 2.6]{polacik} of
Pol\'{a}\v{c}ik yields that $\omega(u)$ only consists of radial functions.   

In the elliptic setting, the rotating plane method was
used in combination with other arguments by Pacella and the
second author \cite{pacella} to prove -- under some convexity hypothesis on the
nonlinearity -- foliated Schwarz symmetry of
solutions with low Morse index. Later, this result was extended in \cite{gladiali} to
unbounded domains under additional restrictions. The rotating plane method in the elliptic setting
relies on different forms of the maximum principle (e.g., the maximum
principle for small domains, see \cite{bere}). In the parabolic
setting, the argument relies in a more subtle way on Harnack type
inequalities and related estimates for
linear equations. These estimates have been developed in a very useful
form by Pol\'{a}\v{c}ik in order to derive asymptotic symmetry results
in the Steiner symmetric setting \cite{polacik}, and we will make use
of them in the present framework.     

The paper is organized as follows. In Section~\ref{sec:framework} we present the general
framework for our symmetry results in the context of fully nonlinear
equations. In Section~\ref{sec:char-foli-schw} we provide a new
characterization of foliated Schwarz symmetry which is useful in
combination with the rotating plane method.  In
Section~\ref{sec:line-parab-probl}, we recall some estimates for linear
parabolic equations derived by Pol\'{a}\v{c}ik in \cite{polacik}, and
we introduce a family of linear parabolic problems associated with the
nonlinear problem. Finally, in Section
\ref{sec:rotating-hyperplanes} we apply the rotating plane method to the parabolic problem and prove the main result.

We add some closing remarks. Although Corollary~\ref{maincoro-simple} is an
immediate consequence of Theorem~\ref{maintheo-simple}, it can also be
derived independently by a somewhat simpler argument not relying on
the deep estimates in \cite{polacik}. In order to keep this paper
short, we leave the details to the reader.\\ 
In the present paper, we always consider a bounded radial domain. It
is natural to ask whether similar results are available in the case
where $B= \R^N$ or $B$ is the exterior of a ball in $\R^N$. This is
part of work in progress. We note
that, in a somewhat restricted setting, Pol\'{a}\v{c}ik \cite{polacik-unbounded} 
also developed a parabolic version of the moving plane method for the
case where the underlying domain is the entire space. However,
it is not straightforward to extend the parabolic {\em rotating plane
argument} to the unbounded setting, since additional obstacles
arise. In particular it seems more difficult than in
\cite{polacik-unbounded} to start the method and to analyze extremal hyperplanes, since the
local behaviour of $f$ close to zero cannot be used in the same way as
in \cite{polacik-unbounded}.

\section{The framework}
\label{sec:framework}
In this section we set up a more general framework for our symmetry
results. The setting is strongly inspired by \cite{polacik}. We
consider the fully nonlinear parabolic problem 
\begin{equation}
\label{(P)}
\begin{aligned}
 &u_t = F(t,x,u,\nabla u,D^2 u), && \qquad (x,t) \in (0,\infty)\times B,\\
    &u(x,t) = 0,  && \qquad (x,t)\in \partial B\times (0,\infty),\\
   &u(x,0) = u_0(x), && \qquad x\in B,
\end{aligned}
\end{equation}
where, as before, $B$ is a bounded radial domain in $\mathbb R^N,\
N\geq 2$, and $D^2 u=(u_{x_i x_j})_{i,j=1}^N \in \R^{N \times N}$ is
the Hessian of $u$. As for the right hand side of \eqref{(P)}, we consider the
following assumptions. 
\begin{itemize}
 \item [(F1)] \emph{Reflection invariance:} We have 
$$ 
F: [0,\infty) \times \overline B \times {\cal B} \to \mathbb R,
$$
where ${\cal B}$ is an open convex set in $\mathbb R\times \mathbb R^N\times
\mathbb R^{N\times N}$ such that $B \times {\cal B}$ is invariant under the
transformations
\begin{align*}
(x,u,p,q) \mapsto (Rx,u,R p,R q R),  \text{ for every hyperplane
  reflection $R \in \R^{N \times N}.$}
\end{align*}
Moreover,
$$
F(t,Rx,u,R p,R q R)=F(t,x,u,p,q)
$$
for every hyperplane reflection $R \in \R^{N \times N}$ and 
  $(t,x,u,p,q)\in (0,\infty) \times B \times {\cal B}$.
\item [(F2)] \emph{Regularity:} $F$ is continuous on $[0,\infty)\times \overline{B}\times {\cal B}$ and Lipschitz in $(u,p,q),$ uniformly with respect to $x$ and $t,$ i.e., there is $L>0$ such that
\begin{align*}
 \sup_{x\in B,t\geq 0}|F(t,x,u,p,q)-F(t,x,\tilde u,\tilde  p,\tilde  q)|\leq L |(u,p,q)-(\tilde u,\tilde  p,\tilde  q)|
\end{align*}
for all $(u,p,q)-(\tilde u,\tilde  p,\tilde  q)\in {\cal B}.$  Moreover, $F$ is differentiable with respect to $q$ on $[0,\infty)\times B\times {\cal B}.$
\item [(F3)] \emph{Boundedness:} $(0,0,0)\in {\cal B}$ and the function $F(\cdot,\cdot,0,0,0)$
  is bounded on $[0,\infty) \times B$.
\item [(F4)] \emph{Ellipticity:} There is a constant $\alpha_0>0$ such that
\begin{align*}
 \partial_{q_{ij}} F(t,x,u,p,q)\xi_i\xi_j \geq \alpha_0 |\xi|^2
\end{align*}
for all $(t,x,u,p,q)\in [0,\infty)\times B\times {\cal B}$ and $\xi\in
\mathbb R^N.$ Here and below, we use the summation convention (summation over repeated indices).
 \end{itemize}

We point out that these hypothesis are closely related to the ones in
\cite[Section 2]{polacik}. However, in contrast to \cite{polacik} we make no monotonicity assumptions on the nonlinearity and it may also include terms depending on the radial derivative of $u.$ So this allows us to also consider equations like
\begin{align*}
u_t=g(t,|x|,u,|\nabla u|,\Delta u)+d(|x|)\nabla u\cdot x,\ \ \ (x,t)\in B\times[0,\infty),
\end{align*} 
where $r \mapsto d(r)$ is a continuous function on $\mathbb R$,
$g=g(t,r,u,\eta,\xi)$ is continuous on $\mathbb R^5$ and Lipschitz in
$(u,\eta,\xi)$ uniformly in $(t,r)$, $g_\xi$ exist everywhere and
$g_\xi\geq \alpha_0$ for some positive constant $\alpha_0.$

The symmetry result which we want to prove
in this general setting relies also on assumptions $(U1)$ and $(U2)$ for a
fixed solution of (\ref{(P)}), which were stated in the
introduction. 

\begin{theo}\label{teodirichlet}
Assume $(F1)-(F4)$, and let $u\in C^{2,1}(B\times (0,\infty))\cap
C(\overline B\times[0,\infty))$ be a classical solution of the problem
$\eqref{(P)}$ satisfying assumptions (U1) and (U2). Then $u$ is asymptotically
foliated Schwarz symmetric with respect to some $p \in S$, 
i.e. all the elements in $\omega(u)$ are foliated Schwarz symmetric
with respect to $p$.
 \end{theo}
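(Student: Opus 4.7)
The plan is to prove Theorem~\ref{teodirichlet} via a \emph{rotating plane} argument, combining three ingredients: a linearization of the reflection difference across each hyperplane through the origin, the Pol\'{a}\v{c}ik-type linear parabolic estimates recalled in Section~\ref{sec:line-parab-probl}, and the characterization of foliated Schwarz symmetry in terms of reflection inequalities (Proposition~\ref{sec:char-foli-schw-1}) from Section~\ref{sec:char-foli-schw}.

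First I linearize. For $e' \in S$, set
\[
V_{e'}(x,t) := u(x,t) - u(\sigma_{e'} x, t), \qquad (x,t) \in \overline{B(e')} \times [0,\infty).
\]
The reflection invariance (F1) implies that $x \mapsto u(\sigma_{e'} x,t)$ solves the same fully nonlinear problem on $B$; subtracting the two equations and using the Lipschitz continuity and $q$-differentiability of $F$ from (F2), one obtains
\[
(V_{e'})_t = a_{ij}^{e'}(x,t)\, \partial_{ij} V_{e'} + b_i^{e'}(x,t)\, \partial_i V_{e'} + c^{e'}(x,t)\, V_{e'} \quad \text{in } B(e') \times (0,\infty),
\]
a uniformly parabolic linear equation by (F4), with coefficients bounded in terms of $L$ and $\alpha_0$, and with homogeneous Dirichlet data on $\partial B(e')$. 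By (U1), $V_e(\cdot,0) \geq 0$ and $\not\equiv 0$ on $B(e)$, so the parabolic strong maximum principle gives $V_e > 0$ in $B(e)\times (0,\infty)$. Passing to a subsequential limit $u(\cdot,t_n) \to z \in \omega(u)$ shows $z \geq z \circ \sigma_e$ in $B(e)$ for every $z \in \omega(u)$.

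The heart of the rotating plane argument propagates this asymptotic reflection inequality to as many directions as possible. I introduce
\[
\mathcal{O}(u) := \{e' \in S : z \geq z \circ \sigma_{e'} \text{ in } B(e') \text{ for every } z \in \omega(u)\},
\]
which is closed in $S$ (by compactness of $\omega(u)$ from (U2) and continuity of $e' \mapsto V_{e'}$) and contains $e$. The key rigidity statement is that at any boundary direction $e^* \in \partial \mathcal{O}(u)$, every $z \in \omega(u)$ is symmetric with respect to $H(e^*)$. To establish it, I take $e^*_n \to e^*$ with $e^*_n \notin \mathcal{O}(u)$, pick $z_n \in \omega(u)$ with $V_{e^*_n}(z_n) < 0$ somewhere in $B(e^*_n)$, and extract a limit $z_n \to z \in \omega(u)$ using (U2). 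By invariance of $\omega(u)$ under the evolution, $z$ lies on an entire eternal parabolic trajectory in $\omega(u)$ along which $V_{e^*}$ satisfies the linearized equation, is nonnegative, and vanishes at an interior point; the Harnack and strong-maximum-principle estimates from \cite{polacik} recalled in Section~\ref{sec:line-parab-probl} then propagate this zero along the whole orbit, yielding $z = z \circ \sigma_{e^*}$ for every $z \in \omega(u)$.

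Combining $e \in \mathcal{O}(u)$, the rigidity at $\partial \mathcal{O}(u)$, and the connectedness of $S$ for $N \geq 2$ gives the local monotonicity of all $z \in \omega(u)$ with respect to every relevant cylindrical angle around a common axis; Proposition~\ref{sec:char-foli-schw-1} then converts this into foliated Schwarz symmetry with respect to some $p \in S$. The main obstacle is the rigidity step at boundary directions $e^*$: the linearized coefficients depend on the a priori uncontrolled trajectory and on the rotating hyperplane, and the Pol\'{a}\v{c}ik estimates are essential precisely because they are robust enough to propagate the zero of $V_{e^*}$ along eternal orbits in $\omega(u)$ stably under perturbation of the reflection direction.
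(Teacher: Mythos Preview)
Your overall architecture---linearize the reflection difference, run a rotating plane argument, and close with Proposition~\ref{sec:char-foli-schw-1}---matches the paper. The genuine gap is in the rigidity step at $e^*\in\partial\mathcal{O}(u)$.

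You invoke ``invariance of $\omega(u)$ under the evolution'' and an ``entire eternal parabolic trajectory in $\omega(u)$'' along which $V_{e^*}$ solves the linearized equation. In the setting of Theorem~\ref{teodirichlet} the nonlinearity $F$ depends explicitly on $t$ (see (F1)--(F3)), so the problem is non-autonomous: elements of $\omega(u)$ need not be solutions of any PDE, there is no semiflow under which $\omega(u)$ is invariant, and no eternal orbit through $z$ on which the strong maximum principle could propagate a zero. Even granting autonomy, your sequence argument does not produce an interior zero of $V_{e^*}$ for the limit $z$: from $z_n\in\omega(u)$ with $V_{e_n^*}(z_n)<0$ at some point of $B(e_n^*)$ you only get $V_{e^*}(z)\ge 0$ in the limit, and the violating points may drift to $\partial B(e^*)$. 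Finally, even if one particular $z$ were symmetric with respect to $H(e^*)$, you still need to conclude this for \emph{every} element of $\omega(u)$; your sketch does not explain this step.

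The paper avoids all of this by never treating elements of $\omega(u)$ as solutions. It works directly with the function $w_{e'}=V_{e'}$ along the actual trajectory $u(\cdot,t)$ and proves the contrapositive (Lemma~\ref{lobolemab}): if some $\tilde z\in\omega(u)$ satisfies $\tilde z_e\not\equiv 0$, then $e$ lies in the \emph{interior} of $\mathcal{M}$. The mechanism is quantitative: pick a late time at which $w_e$ is close to $\tilde z_e$, use the Harnack-type Lemma~\ref{lemapol} to get a uniform lower bound for $w_e$ on a large compact $D\subset\subset B(e)$, perturb $e$ to nearby $e'$ by continuity, and then apply Theorem~\ref{polacikthm} (the stability estimate) to $w_{e'}$ on $B(e')$ to obtain $\|w_{e'}^-(\cdot,t)\|_{L^\infty}\to 0$. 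This is exactly where the Pol\'a\v cik estimates enter---not to propagate zeros along eternal orbits in $\omega(u)$, but to show that a quantitative positivity of $w_{e'}$ at one late time forces the negative part to vanish asymptotically. A companion lemma (Lemma~\ref{teopohob}) uses the same device to show that a full neighbourhood of the initial direction $e$ lies in $\mathcal{M}$, which guarantees that in each two-plane the extremal directions $e_{\Theta_1}\neq e_{\Theta_2}$ are distinct; your outline also omits this step, and without it the connected component of $\mathcal{O}(u)\cap P$ through $e$ could degenerate to a point.
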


We quickly show how Theorem~\ref{teodirichlet} implies
Theorem~\ref{maintheo-simple}. Indeed, by (U2), we have
$$
K:= \sup_{t\ge 0,x \in B}|u(x,t)|<\infty.
$$
Hence we may consider (\ref{(P-simple)}) as
a special case of (\ref{(P)}) with ${\cal B}=(-K-1,K+1) \times \mathbb R^N\times
\mathbb R^{N^2}$ and  
$$ 
F: [0,\infty) \times \overline B \times {\cal B} \to \mathbb R,
\qquad F(t,x,u,p,q)= \trace(q)+f(t,|x|,u).
$$
With this definition, assumptions (F1) and (F4) are obviously
satisfied. Moreover, (F2) and (F3) follow from assumptions (f1) and
(f2) of Theorem~\ref{maintheo-simple}, respectively. Hence the
assumptions of Theorem~\ref{maintheo-simple} imply those of
Theorem~\ref{teodirichlet}, and therefore
Theorem~\ref{maintheo-simple} follows.

We point out that, as noted in \cite[Proposition
2.7]{polacik}, assumptions (F2),(F3) and (F4) on the nonlinearity $F$
ensure that, for every solution $u\in C^{2,1}(B\times (0,\infty))\cap
C(\overline B\times[0,\infty))$ of (\ref{(P)}) satisfying (U2),
\begin{align}\label{hoeldercontinuos}
\sup_{\genfrac{}{}{0pt}{}{\genfrac{}{}{0pt}{}{\scriptstyle{x,\bar x\in
        B, t,\bar t\in[s,s+1],}}{\scriptstyle{x \not= \bar x, t \not=
        \bar t,}}}{\scriptstyle{s\geq 1}}
} \frac{|u(x,t)-u(\bar x,\bar t)|}{|x-\bar x|^\alpha+|t-\bar
  t|^{\frac{\alpha}{2}}}< \infty \qquad \text{for some $\alpha>0$.}
\end{align}
Indeed, being a bounded radial domain, $B$ is smoothly bounded and
therefore satisfies assumption (A) of \cite[Proposition
2.7]{polacik}. It follows immediately from (\ref{hoeldercontinuos}) that the orbit
$\{u(\cdot,t)\::\: t >0\}$ is relatively compact in $C_0(\overline
B)$ and that $\omega(u)$ is a nonempty compact subset of $C_0(\overline
B)$ satisfying $\dist(u(t,\cdot),\omega(u)) \to 0$ in $C_0(\overline
B)$ as $t \to \infty$.

We finally note that hypothesis (F3) can be dropped if we assume
instead a priori that $\{u(\cdot,t):t>0 \}$ is an equicontinuous subset of $C(\overline{B}).$


\section{Characterizations of foliated Schwarz symmetry}
\label{sec:char-foli-schw}
As before, $B$ denotes a radial subdomain of $\R^N$,
$N\geq 2,$ and $I$, $S$, $H(e)$, $B(e)$ and $\sigma(e)$ are defined as in
the introduction for $e \in S$. We start by proving an auxiliary lemma.

\begin{lemma}\label{lemma1}
 Let $v\in C(\mathbb R)$ be an even and $2\pi$-periodic function, 
and let $\cR$ denote the points of reflectional symmetry of
$v$. If, for some $\eta \in \R$,  
 \begin{equation}
\label{geo4}     
   \begin{aligned}
  v(\eta + \phi)&\geq v(\eta -\phi)\ \  \ \text{ for all } \phi\in
  [0,\pi] \text{ and }\\
  v(\eta + \phi_0)&> v(\eta -\phi_0)\ \  \text{ for some }\phi_0\in(0,\pi).
   \end{aligned}
 \end{equation}
then we have $\cR= \{n \pi \::\: n \in \mathbb{Z}\}$. 
\end{lemma}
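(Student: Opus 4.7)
The plan is to reformulate the hypothesis in terms of the anti-symmetric part $w(\phi):=v(\eta+\phi)-v(\eta-\phi)$, which is continuous, odd, $2\pi$-periodic, satisfies $w\ge 0$ on $[0,\pi]$, and is strictly positive at $\phi_0\in(0,\pi)$. The lemma will follow once I show that every $\xi\in\cR$ outside $\pi\mathbb{Z}$ forces $w\equiv 0$.

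First I would verify the easy inclusion $\pi\mathbb{Z}\subset\cR$: evenness of $v$ gives $0\in\cR$, the identity $v(\pi+t)=v(t-\pi)=v(\pi-t)$ (using $2\pi$-periodicity followed by evenness) gives $\pi\in\cR$, and translation by $2\pi$ fills in the rest. Since $\cR$ is also closed under $\xi\mapsto -\xi$ (again because $v$ is even) and under translation by $2\pi$, the remaining task reduces to ruling out any $\xi\in\cR\cap(0,\pi)$.

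For such a $\xi$, composing the reflections of $v$ at $0$ and at $\xi$ gives $v(2\xi+t)=v(-t)=v(t)$, so $2\xi$ is a period of $v$. Combining this with the $2\pi$-periodicity, I obtain a period
\[
T:=\begin{cases}2\xi,&\xi\le\pi/2,\\ 2\pi-2\xi,&\xi>\pi/2,\end{cases}
\]
which lies in $(0,\pi]$. The function $w$ inherits $T$-periodicity from $v$. For $\phi\in[T/2,T]$, $T$-periodicity and oddness give $w(\phi)=w(\phi-T)=-w(T-\phi)$, and since $T-\phi\in[0,T/2]\subset[0,\pi]$ one has $w(T-\phi)\ge 0$, so $w(\phi)\le 0$. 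On the other hand $[T/2,T]\subset[0,\pi]$ (here I use $T\le\pi$) gives $w(\phi)\ge 0$, so $w\equiv 0$ on $[T/2,T]$. Oddness then extends this to $[-T,-T/2]$ and $T$-periodicity to $[0,T/2]$, so $w\equiv 0$ on $\R$, contradicting $w(\phi_0)>0$.

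The substantive step is the middle one: extracting a period of length at most $\pi$ from a single putative extra reflection point $\xi\in(0,\pi)$. The case distinction $\xi \lessgtr \pi/2$ matters because $2\xi$ by itself can exceed $\pi$; it is the interplay with the $2\pi$-periodicity that guarantees $T\le\pi$, which is precisely what makes the final pinching argument on $w$ work.
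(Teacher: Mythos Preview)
Your proof is correct and takes a genuinely different route from the paper's. The paper begins by invoking (without proof) the structural fact that for an even, continuous, $2\pi$-periodic, non-constant function the set $\cR$ of reflection points is necessarily of the form $\{n\pi/k: n\in\mathbb{Z}\}$ for some positive integer $k$; it then assumes $k\ge 2$, performs a translation to normalize $\eta\in(0,\pi/k)$ and $\phi_0\in(0,2\pi/k]$, and derives a contradiction through a chain of equalities and inequalities that exploits the reflections at $0$ and $\pm\pi/k$ together with the $2\pi/k$-periodicity. Your argument bypasses the structure theorem for $\cR$ entirely: from a single hypothetical reflection point $\xi\in(0,\pi)$ you extract a period $T\le\pi$ directly (the case split $\xi\lessgtr\pi/2$ being the key observation), and then the pinching of the odd auxiliary function $w(\phi)=v(\eta+\phi)-v(\eta-\phi)$ between $0$ and $0$ on $[T/2,T]$ gives the contradiction cleanly. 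What your approach buys is self-containment and transparency: nothing is asserted as ``easy to deduce,'' and the role of the bound $T\le\pi$ is made explicit. What the paper's approach buys is a slightly shorter endgame once the structure of $\cR$ is granted, since the contradiction then collapses to a single displayed chain.
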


\begin{proof}
>From the fact that $v$ is even, continuous, $2\pi$-periodic and non-constant by assumption, it is
easy to deduce that $\cR= \{\frac{n \pi}{k}\,:\, n \in \mathbb{Z}\}$ with some positive
integer $k$. We suppose by contradiction that $k \ge 2$. Then $v$ is $\frac{2\pi}{k}-$periodic, and for a suitable
translation $w$ of $v$ we can assume that there is some $\eta\in(0,\frac{\pi}{k})$ and some $\phi_0\in (0,\frac{2\pi}{k}]$ such that
\begin{align*}
w(\phi)&= w(-\phi) &\text{ for all } \phi\in \mathbb R,\\
w\bigg(\pm\frac{\pi}{k} + \phi\bigg)&= w\bigg(\pm\frac{\pi}{k}-\phi\bigg) &\text{ for all } \phi\in \mathbb R,\\
w(\eta+\phi)&\geq w(\eta-\phi) &\text{ for all } \phi\in (0,\pi),\\
w(\eta+\phi_0)&>w(\eta-\phi_0).
\end{align*}
Since $0<\frac{2\pi}{k}-\phi_0< \pi$, it follows that
\begin{align*}
w(\eta+\phi_0)&>w(\eta-\phi_0)=w\bigg(-\frac{2\pi}{k}-(\eta-\phi_0)\bigg)=w\bigg(\eta+\frac{2\pi}{k}-\phi_0\bigg)\\
&\geq w\bigg(\eta-\frac{2\pi}{k}+\phi_0\bigg)=w\bigg(\frac{2\pi}{k}-\eta-\phi_0\bigg)=w(\eta+\phi_0),
\end{align*}
which yields a contradiction. Hence $k=1$, and thus the claim follows.
\end{proof}

Now we generalize a result of Brock (\cite{brock}, Lemma 4.2)
to characterize sets of foliated Schwarz symmetric functions with
respect to a common direction. 

\begin{prop}\label{newprop}
Let $\cU$ be a set of continuous functions defined on a radial domain $B\subset
 \mathbb R^N,$ $N\geq 2.$ Define
 \begin{equation}
   \label{eq:5}
M:=\{e\in S \mid u(x) \ge u(\sigma_e(x)) \text{ for all }x\in B(e) \text{ and } u \in \cU\}.
 \end{equation}
If
\begin{equation}\label{eq:3}
S= M \cup -M,
\end{equation}
i.e., if for all $e \in S$ we have
$$
u \geq u \circ \sigma_e\; \text{in $B(e)$ for all $u \in
  \cU$}\qquad \text{ or }\qquad u \leq u \circ \sigma_e \; \text{in
  $B(e)$ for all $u \in \cU$},
$$
then there is $p
 \in S$ such that every $u \in \cU$ is foliated Schwarz symmetric with
 respect to $p$.  
\end{prop}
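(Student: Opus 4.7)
The plan is to reduce to Brock's single-function result (\cite{brock}, Lemma 4.2) and then use the covering hypothesis $S=M\cup(-M)$ to align the foliated Schwarz axes across the whole family. For each $u\in\cU$ define $M_u:=\{e\in S : u\ge u\circ\sigma_e \text{ on }B(e)\}$. The inclusion $M\subseteq M_u$ yields $M_u\cup(-M_u)\supseteq M\cup(-M)=S$, so by Brock's Lemma 4.2 every $u\in\cU$ is foliated Schwarz symmetric with respect to some individual axis $p_u\in S$.

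The next step pins down $M_u$ precisely when $u$ is non-radial. The elementary identity
\begin{equation*}
\cos\theta(x)-\cos\theta(\sigma_e x)=\frac{2}{|x|}(x\cdot e)(e\cdot p_u),
\end{equation*}
where $\theta$ is the polar angle from $p_u$, combined with $x\cdot e>0$ on $B(e)$ and the nonincreasing polar monotonicity of $u$, immediately yields $\{e : e\cdot p_u\ge 0\}\subseteq M_u$. Conversely, if $e\cdot p_u<0$ and $u$ is non-radial, I exploit that $u$ is strictly decreasing in $\theta$ on some subinterval to find $x\in B(e)$ with $\theta(x)>\theta(\sigma_e x)$ both lying in that subinterval; the identity then forces $u(x)<u(\sigma_e x)$, so $e\notin M_u$. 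Thus $M_u=\{e\cdot p_u\ge 0\}$ when $u$ is non-radial. Moreover the axis $p_u$ is uniquely determined: if $u$ admitted a second axis $p_u'\ne p_u$, then either $p_u'=-p_u$, which by non-radiality is impossible (foliated Schwarz symmetry with respect to two opposite poles forces radiality), or $p_u'\ne\pm p_u$, in which case the restriction of $u$ to the $2$-plane spanned by $p_u,p_u'$ and a sphere $S_r$ yields an even $2\pi$-periodic continuous function on $S^1$ with two distinct reflection axes yet still strictly monotone somewhere, contradicting Lemma \ref{lemma1}.

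Finally I construct the common axis. If every element of $\cU$ is radial, any $p\in S$ works. Otherwise, fix a non-radial $u_0\in\cU$ and set $p:=p_{u_0}$. For any other non-radial $u\in\cU$ with axis $p_u$, the previous step gives $M_{u_0}=\{e\cdot p\ge 0\}$ and $M_u=\{e\cdot p_u\ge 0\}$, hence $M=M_{u_0}\cap M_u$. If $p_u\ne\pm p$, the set $M\cup(-M)$ is a union of two closed lunes and omits the open saddle regions $\{e\cdot p>0,\ e\cdot p_u<0\}$ and $\{e\cdot p<0,\ e\cdot p_u>0\}$; if $p_u=-p$, then $M$ and $-M$ both collapse to the equator $\{e\cdot p=0\}$, a proper subset of $S$ for $N\ge 2$. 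Either way the hypothesis $S=M\cup(-M)$ is violated, so $p_u=p$. Since radial elements of $\cU$ are foliated Schwarz with respect to any axis, every $u\in\cU$ is foliated Schwarz with respect to $p$.

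The main technical work lies in the sharp characterization $M_u=\{e\cdot p_u\ge 0\}$ for non-radial $u$ and in the axis uniqueness via Lemma \ref{lemma1}; once these are in hand, the final argument reduces to straightforward set bookkeeping on $S$.
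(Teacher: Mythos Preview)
Your argument is essentially correct and reaches the result, but it follows a genuinely different route from the paper and contains two points that need tightening.

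\textbf{Comparison.} The paper never reduces to Brock's single-function lemma. Instead it works with the whole family $\cU$ from the outset: using the set $\cA_1=\{e\in S: u(x)>u(\sigma_e x)\text{ for some }u\in\cU,\ x\in B(e)\}$, which is relatively open in $S$ and contains no antipodal pair, it picks $e_1\in\partial\cA_1$ (so $u\equiv u\circ\sigma_{e_1}$ for every $u\in\cU$), then iterates inside $S\cap H(e_1)$ to produce $N-1$ orthogonal common symmetry hyperplanes. This already gives a common axis; a short $2$-dimensional reduction together with Lemma~\ref{lemma1} then fixes the direction of monotonicity. Your approach inverts the order: first obtain an individual axis $p_u$ for each $u$ via Brock, then compute $M_u$ as the closed hemisphere $\{e\cdot p_u\ge 0\}$, and finally force all hemispheres to coincide by a lune-covering argument on $S$. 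Both strategies are valid; the paper's is self-contained (no black-box citation of Brock) and constructs the common symmetry hyperplanes directly, while yours makes the hemisphere structure of $M_u$ explicit and reduces the alignment step to elementary spherical geometry.

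\textbf{Two fixes.} (i) In the converse inclusion $M_u\subseteq\{e\cdot p_u\ge 0\}$, your claim that for $e\cdot p_u<0$ one can choose $x\in B(e)$ with \emph{both} $\theta(x)$ and $\theta(\sigma_e x)$ inside a fixed subinterval of strict decrease can fail: if the strict decrease of $u(r,\cdot)$ is confined to $(0,\epsilon)$ with $\epsilon<\arccos|e\cdot p_u|$, then $\theta(x)\ge\arccos|e\cdot p_u|>\epsilon$ for every $x\in B(e)\cap S_r$. The clean fix is the argument you already use for axis uniqueness: since $-e\cdot p_u>0$ gives $-e\in M_u$, assuming $e\in M_u$ forces $u\equiv u\circ\sigma_e$; restricting to the $2$-plane through $p_u$ and $e$ yields a nonconstant even $2\pi$-periodic function, nonincreasing on $[0,\pi]$, with a reflection point strictly between $0$ and $\pi$, which forces constancy --- a contradiction. (ii) In the final paragraph you write $M=M_{u_0}\cap M_u$, but only the inclusion $M\subseteq M_{u_0}\cap M_u$ holds in general (other non-radial members of $\cU$ may shrink $M$ further). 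That inclusion is all your lune argument needs, so the logic is unaffected once you replace the equality by $\subseteq$.
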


\begin{proof}
We start by constructing orthogonal unit vectors $e_1,\dots,e_{N-1}$ such that 
\begin{equation}
  \label{eq:4}
u \equiv u \circ \sigma_{e_i} \qquad \text{for $i=1,\dots,N-1$ and every $u
  \in \cU.$}  
\end{equation}
For this we first consider the set 
\begin{align*}
\cA_1:= \{e \in S\::\: u(x)>u(\sigma_e (x))\; \text{for some $u \in
  \cU$ and some $x \in B(e)$}\}.
\end{align*}
By (\ref{eq:3}) we have $\cA_1 \subset M$, and $\cA_1$ does not
contain antipodal points. Moreover, $\cA_1$ is a relatively
open subset of $S$. If $\cA_1$ is empty, then $u \equiv u \circ \sigma_e$ for
any $u \in \cU$ and $e \in S$, so any choice of orthonormal
vectors $e_1,\dots,e_{N-1}$ satisfies (\ref{eq:4}). Hence we may assume that
$\cA_1 \not= \varnothing.$ Then also the relative boundary $\partial
\cA_1$ of $\cA_1$ in $S$ is non-empty. Let $e_1 \in \partial
\cA_1$; then any $u \in \cU$ satisfies $u \equiv u \circ \sigma_{e_1}$. Next we consider 
\begin{align*}
\cA_2:=\{e \in S \cap H(e_1)\::\: u(x) >u(\sigma_e (x)) \text{ for some
  $u \in \cU$ and some $x \in
  B(e)$}\}.
\end{align*}
If $\cA_2$ is empty, then may complement $e_1$ with any choice of orthonormal vectors
$e_2,\dots,e_{N-1}$ in $S \cap H(e_1)$ to obtain (\ref{eq:4}). If $\cA_2$
is nonempty, then -- by the same argument as above -- also the
relative boundary $\partial \cA_2$ of $\cA_2$ in $S \cap H(e_1)$ is nonempty, and every vector
$e_2 \in \partial \cA_2$ satisfies $u \equiv u \circ \sigma_{e_2}$ for every
$u \in \cU$. Successively we find orthogonal vectors
$e_1,\dots,e_{N-1} \in S$ such that (\ref{eq:4}) holds (then the process stops since $S \cap H(e_1)\cap
H(e_2) \cap \dots \cap H(e_{N-1})$ consists merely of two antipodal
points).\\
Without loss of generality, we may now assume that the vectors $e_1,\dots,e_{N-1}$
satisfying (\ref{eq:4}) are the first $N-1$ coordinate
vectors. Next we show that every hyperplane containing the $x_N$-axis
is a symmetry hyperplane for every $u \in \cU$. For this let $q=(q_1,\ldots,q_{N})
\in S$ be such that $\mathbb R e_N \subset H(q).$  By \eqref{eq:3} we
can assume that $q\in M$ (otherwise we replace $q$ by $-q$). Since
$q_N=0$, for $x\in B(q)$ we have that
$[\sigma_{e_1} \circ \ldots \circ \sigma_{e_{N-1}}](x)=-\sigma_{e_N}(x)\not\in B(q),$
and from (\ref{eq:4}) we deduce that
$$
u(x)=u(-\sigma_{e_N}(x))\leq
u(\sigma_q(-\sigma_{e_N}(x)))=u(-\sigma_{e_N}(\sigma_q(x)))=u(\sigma_q(x))\leq
u(x)
$$ 
for every $u \in \cU$. Hence $u\equiv u\circ\sigma_q$ for every $u \in \cU$, as
claimed. We conclude that every $u \in \cU$ is axially symmetric with respect to the
axis $\R e_N.$  

To complete the proof of foliated Schwarz symmetry, we may now
restrict to any two-dimensional subspace of $\R^N$ containing the axis
$\R e_N$, hence we may assume that $N=2$ from now on. Let $u\in\cU$ be a non radial function. Then there are ${e_*}\in S$ and $x\in B({e_*})$ such that ${e_*}\cdot e_2>0$ and
\begin{align}
 u(x)&>u(\sigma_{e_*}(x)) \text{ or } \label{case1}\\
u(x)&<u(\sigma_{e_*}(x)).\label{case2}
\end{align}

Assume \eqref{case1} first. Writing $u=u(r,\phi)$ in (permuted) polar
coordinates with $x_1= r \sin \phi$ and $x_2= r \cos \phi$, we get that $u$
is even in $\phi$, and that there are $r>0$ and $\eta \in (-\pi,0)$ such
that (\ref{geo4}) holds for the function $\R \to \R,\: \phi \mapsto u(r,\phi)$. Hence by Lemma~\ref{lemma1} there are no other
points of reflectional symmetry of this function in $(-\pi,0)$ except the origin, and
by \eqref{eq:3} this implies that for every $e\in S$ with $e\cdot
e_2>0$ we have $u\geq u\circ \sigma_e$ and $u \not \equiv u\circ
\sigma_e$ in $B(e)$. Then again by (\ref{eq:3}) we have that
\begin{align*}
u \ge u \circ \sigma_e \text{ in } B(e) \text{ for all } u \in \cU \text{ and all } e \in
  S \text{ with } e \cdot e_2 \ge 0,
\end{align*}
and this readily implies that every $u \in \cU$ is foliated
Schwarz symmetric with respect to the unit vector $e_2$.\\
A similar argument shows that, if we assume \eqref{case2} then every $u \in \cU$ is foliated
Schwarz symmetric with respect to the unit vector $-e_2$. The proof is finished.
\end{proof}

The following Proposition characterizes foliated Schwarz symmetry
by properties related to the method of rotating planes.

\begin{prop}
\label{sec:char-foli-schw-1}
 Let $\cU$ be a set of continuous functions defined on a radial domain $B\subset
 \mathbb R^N,$ $N\geq 2,$ and let $M$ be defined as in
 (\ref{eq:5}). Moreover, let $\tilde e \in M$.\\
If for all two dimensional subspaces $P\subseteq
 \mathbb R^N $ containing $\tilde e$ there are two different points
 $p_1, \ p_2$ in the same connected component of $M\cap P$ such that
 $u \equiv u \circ \sigma_{p_1}$ and $u \equiv u \circ
 \sigma_{p_1}$ for every $u \in \cU$, then there is $p
 \in S$ such that every $u \in \cU$ is foliated Schwarz symmetric with
 respect to $p$.
\end{prop}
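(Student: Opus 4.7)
The plan is to reduce the statement to Proposition~\ref{newprop} by checking $S = M \cup (-M)$. Given an arbitrary $e^{*} \in S$, I aim to show $e^{*} \in M$ or $-e^{*} \in M$; the case $e^{*} = \pm\tilde e$ is trivial since $\tilde e \in M$, so assume $e^{*}$ and $\tilde e$ are linearly independent and set $P := \mathrm{span}(\tilde e, e^{*})$. The hypothesis applied to $P$ produces distinct $p_1, p_2$ in a single connected component $\gamma$ of $M \cap P$ with $u = u \circ \sigma_{p_1} = u \circ \sigma_{p_2}$ for all $u \in \cU$. Using angular coordinates $\phi \mapsto e(\phi)$ on $S \cap P$, I parametrize the sub-arc of $\gamma$ from $p_1$ to $p_2$ as $\{e(\phi) : \phi \in [\phi_1, \phi_2]\}$ and set $\beta := \phi_2 - \phi_1 \in (0, 2\pi)$.

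The proof then splits according to whether $\beta = \pi$. If $\beta = \pi$, then $\gamma$ contains a closed semicircle of $S \cap P$, and its complement is an open arc of length at most $\pi$ which cannot contain the antipodal pair $\{e^{*}, -e^{*}\}$; hence $e^{*} \in \gamma \subseteq M$ or $-e^{*} \in \gamma \subseteq M$. When $\beta \ne \pi$, I fix $u \in \cU$ and $z \in P^{\perp}$ and consider the angular slice $v(\psi) := u(r\,e(\psi) + z)$ for each $r > 0$ with $r\,e(\psi) + z \in B$. Because $\sigma_{e(\phi)}$ preserves $P$ and fixes $P^{\perp}$, the inclusion $e(\phi) \in M$ for every $\phi \in [\phi_1, \phi_2]$ translates into the pointwise reflection inequality $v(\psi) \ge v(2\phi + \pi - \psi)$ valid on $\psi \in (\phi - \pi/2, \phi + \pi/2)$, and the symmetries $u \circ \sigma_{p_i} = u$ yield reflection axes of $v$ at $\phi_1 + \pi/2$ and $\phi_2 + \pi/2$, separated by $\beta \not\equiv 0 \pmod \pi$.

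A dichotomy then appears: either strict inequality holds at some interior $\phi$, in which case Lemma~\ref{lemma1} (applied after shifting the first axis to $0$, using evenness to match its sign convention) forces the reflection axes of $v$ to lie in $\pi\mathbb{Z}$ and contradicts the axis at $\beta$; or equality holds throughout, so every $\phi \in [\phi_1, \phi_2]$ produces a reflection axis of $v$, yielding a continuum of reflection axes and hence $v$ constant. Either way $v$ is constant in $\psi$ for every admissible $r, z$, so $u(y + z)$ depends only on $|y|$ and $z$ for $y \in P$, $z \in P^{\perp}$. Thus $u$ is invariant under the full $O(P)$ acting on $P$ and trivially on $P^{\perp}$; in particular $u = u \circ \sigma_{e^{*}}$, so $e^{*}$ lies in $\cR_\cU := \{e \in S : u = u \circ \sigma_e \text{ for all } u \in \cU\} \subseteq M \cap (-M)$. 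Combining both cases gives $S = M \cup (-M)$, and Proposition~\ref{newprop} supplies the common symmetry direction $p$.

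The main technical obstacle will be the setup in the non-antipodal case: the ``same connected component'' hypothesis must be used to promote the two isolated reflection symmetries at $p_1, p_2$ into the pointwise reflection inequality along the entire sub-arc $[\phi_1,\phi_2]$, which is precisely what fuels the Lemma~\ref{lemma1} dichotomy. One must also verify that the slice-wise constancy of $v$ genuinely implies $u$ is $O(P)$-invariant across all perpendicular levels $z \in P^{\perp}$, and not merely on $P$ itself.
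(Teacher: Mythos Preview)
Your proposal is correct and follows essentially the same route as the paper: reduce to Proposition~\ref{newprop} by verifying $S = M \cup (-M)$, and for each two-dimensional $P$ through $\tilde e$ use Lemma~\ref{lemma1} on the angular slice $v$ together with the two reflection axes coming from $p_1,p_2$ and the reflection inequalities along the connecting sub-arc in $M\cap P$. The paper organizes the argument slightly differently---it shows directly that the connected component $K_P$ contains a closed halfcircle (by arguing that if some slice $v$ is non-constant, Lemma~\ref{lemma1} forces the angular separation $\psi\ge\pi$), whereas you split on $\beta=\pi$ versus $\beta\neq\pi$ and in the latter case derive that every slice is constant; but the underlying dichotomy (semicircle in $M\cap P$ versus $O(P)$-invariance of every $u$) and the use of Lemma~\ref{lemma1} are identical. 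One cosmetic remark: your case $\beta=\pi$ could just as well be $\beta\ge\pi$, since then the sub-arc already contains a closed semicircle; your $\beta\neq\pi$ argument still goes through when $\beta>\pi$, it just does more work than necessary.
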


\begin{proof}
Let $P$ be a two dimensional subspace with
$\tilde e\in P$. By hypothesis there is some
connected component $K_P$ of $M\cap P$ and $p_1, p_2 \in K_P$ such
that $u \equiv u\circ \sigma_{p_1}$ and $u\equiv u\circ\sigma_{p_2}$
for every $u \in \cU$. We first show that 
\begin{equation}
  \label{eq:2}
\text{$K_P$ contains a closed halfcircle,}  
\end{equation}
i.e., $\{e\in S \cap P \::\: e\cdot e' \geq 0 \}\subseteq K_P$ for
 some $e'\in S$. We assume without loss of generality that 
$$
p_1=(1,0,\ldots,0),\qquad p_2=(\cos\psi,\sin\psi,0,\ldots,0)\; \  \text{ for some $\psi\in(0,2\pi]$}
$$
and 
$$
(\cos \phi,\sin\phi,0,\ldots,0)=:p_\phi\in M\qquad \text{for all } \phi\in[0,\psi]
$$
(because $p_1$ and $p_2$ are in the same connected component of $M
\cap P$). Let $u \in \cU$. Using polar coordinates, we define 
\begin{align*}
\tilde v(r,\phi,x') := u(r\cos\phi,r \sin \phi, x')=u(x) 
\end{align*}
with $x\in B,\ x'=(x_3,\ldots,x_{N}) \in \R^{N-2},\ \phi\in
\mathbb R,$ and $r=|x| \in I$. If, independently of
the choice of $u \in \cU,$ $\tilde v$ does not depend on $\phi$, then
$M \cap P= S \cap P$ and so
(\ref{eq:2}) holds trivially. So, we may suppose that
$u \in \cU$ was chosen such that the function
$$
v: \R \to \R,\qquad v(\phi):=\tilde v (r_,\phi,x')
$$
is non-constant for some fixed $r>0$ and $x' \in \R^{N-2}$. By assumption, we then have
\begin{align}
 v(\phi)&=v(-\phi), \hspace{1cm}  \phi\in \mathbb R,\nonumber\\
 v(\psi+\phi)&=v(\psi-\phi), \hspace{.45cm}\phi\in \mathbb R,\nonumber\\
 v(\eta+\phi)&\geq v(\eta-\phi), \hspace{.5cm} \eta \in (0,\psi),\phi\in (0, \pi),\label{eq0n}
 \end{align}
i.e. $v$ has two points of reflectional symmetry, one at zero, and one
at $\psi,$ and the points in between satisfy the defining property
of $M.$  Since the function is non-constant, the inequality in
\eqref{eq0n} must be strict for some $\eta \in (0,\psi)$ and $\phi \in
(0,\pi)$. Then, by Lemma \ref{lemma1}, we get that $u \not \equiv u
\circ \sigma_{p_\phi}$ for $\phi \in (0,\pi)$. By assumption, we then
conclude that $p_2 \not= p_\phi$ for $\phi
\in (0,\pi)$, and therefore $\psi \ge \pi$. Hence (\ref{eq:2})
holds, as claimed.\\
Now since (\ref{eq:2}) holds independently of $P$, we conclude that,
for all $e \in S$ we have $e \in M$ or $-e \in M$, so that
(\ref{eq:3}) holds. Hence the assertion follows from Proposition~\ref{newprop}.
\end{proof}

\section{Linear parabolic problems associated with reflections at hyperplanes}
\label{sec:line-parab-probl}

To use the rotating plane method in the parabolic setting, the crucial
step is to consider the linear problem satisfied by the
difference between a solution of \eqref{(P)} and its reflection at a
hyperplane. In order to deal with this problem, we first quote
estimates derived by Pol\'{a}\v{c}ik \cite{polacik} for linear
parabolic equations in a general setting. So in the following, we
consider the general linear equation
\begin{align}
 v_t &= a_{ij}(x,t) v_{x_ix_j}+b_i(x,t)v_{x_i} + c(x,t)v, \ \ (x,t)\in U\times(\tau,T),\label{linear1}\\
 v &= 0, \ \ (x,t)\in\partial U\times(\tau,T),\label{linear2}
\end{align}
where $U$ is an open subset of some fixed bounded domain $\Omega
\subset \R^N$, $0\leq \tau < T \leq \infty,$ the coefficients $a_{ij},b_i,c$ are defined on $U\times(\tau,T),$  are measurable and for some positive constants $\alpha_0,$ $\beta_0$ satisfy that
\begin{align}
\begin{split}\label{coeff}
|a_{ij}(x,t)|,|b_{i}(x,t)|,|c(x,t)|&<\beta_0, \hspace{.9cm} x\in U,t\in[\tau,T),i,j=1,\ldots,N,\\
a_{ij}(x,t)\xi_i\xi_j&\geq \alpha_0 |\xi|^2,\hspace{.3cm} x\in U,t\in[\tau, T),\xi\in\mathbb R^N.
\end{split}
\end{align}

When referring to a solution of equation \eqref{linear1}, we mean a
function $v$ in the Sobolev space $W^{2,1}_{N+1,loc}(U\times
(\tau,T))$ such that \eqref{linear1} is satisfied almost everywhere. 
A solution of the boundary value problem
\eqref{linear1},\eqref{linear2} is in addition supposed to be
continuous on $\overline{U}\times[\tau,T)$ and to satisfy
\eqref{linear2} in the pointwise sense. The following two results are special cases of Theorems by
Pol\'{a}\v{c}ik (see \cite[Lemma 3.4 and Theorem 3.7]{polacik}).

\begin{lemma}{ (Special case of \cite[Lemma 3.4]{polacik})}\\
\label{lemapol} Let $\Omega$ be a bounded domain. Given $d,\theta>0,$ there is a positive constant $\kappa$ determined only by $N, \operatorname{diam}(\Omega),\alpha_0, \beta_0,d$ and $\theta$ with the following property.  If $D,U$ are domains in $\Omega$ with $D\subset \subset U,$ $\operatorname{dist}(\overline{D}, \partial U)\geq d,$ and $v\in C(\overline{U}\times[\tau,\tau+4\theta])$ is a solution of \eqref{linear1},~(\ref{linear2}) on $U\times (\tau,\tau+4\theta),$ then 
\begin{align*}
 \inf_{D\times (\tau+3\theta,\tau+4\theta)}v \geq \kappa \sup_{D\times(\tau+\theta,\tau+2\theta)}v-e^{4m\theta}\sup_{\partial_P(U\times(\tau,\tau+4\theta))}v^-,
\end{align*}
where $m=\sup\limits_{U\times (\tau,\tau+4\theta)}c.$  
\end{lemma}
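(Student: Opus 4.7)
The plan is to view this lemma as a weak Harnack inequality for sign-changing solutions of (\ref{linear1})--(\ref{linear2}) and to reduce it to the classical Krylov--Safonov parabolic Harnack inequality for nonnegative solutions by absorbing the negative part on the parabolic boundary into an auxiliary function. First I would eliminate the zeroth-order coefficient (up to a sign) by setting $\tilde v(x,t) := e^{-m(t-\tau)}v(x,t)$, where $m := \sup_{U\times(\tau,\tau+4\theta)} c$. Then $\tilde v$ satisfies a linear equation of the same form as (\ref{linear1}) with the same $a_{ij}$ and $b_i$ and new zeroth-order coefficient $\tilde c := c - m \le 0$, and undoing this substitution at the end costs a multiplicative factor of at most $e^{4m\theta}$ over the cylinder $U\times(\tau,\tau+4\theta)$, which is exactly the exponential factor appearing in the statement.

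Next I would construct a nonnegative auxiliary barrier. Set $M := \sup_{\partial_P(U\times(\tau,\tau+4\theta))}v^-$ and let $w \in C(\overline U \times [\tau,\tau+4\theta])$ solve the same linear equation as $\tilde v$ on $U\times(\tau,\tau+4\theta)$ with parabolic-boundary data $\tilde v^-$. Since $\tilde c \le 0$, the weak maximum principle gives $0 \le w \le M$ throughout, and thus $V := \tilde v + w$ is a continuous strong solution of the same equation with nonnegative parabolic-boundary data; a further application of the maximum principle yields $V \ge 0$ on $\overline U \times [\tau,\tau+4\theta]$.

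I would then apply the classical parabolic Harnack inequality for nonnegative strong solutions to $V$ on the subcylinder $D\times(\tau,\tau+4\theta)$. Since $D \subset\subset U \subset \Omega$ with $\dist(\overline D,\partial U) \ge d$ and the coefficients obey (\ref{coeff}), the Krylov--Safonov theory produces a positive constant $\kappa'$, depending only on $N$, $\operatorname{diam}(\Omega)$, $\alpha_0$, $\beta_0$, $d$ and $\theta$, such that
\[
\inf_{D\times(\tau+3\theta,\tau+4\theta)} V \;\ge\; \kappa'\, \sup_{D\times(\tau+\theta,\tau+2\theta)} V.
\]
Using $0 \le w \le M$ to lose only an additive $M$ on the left and $w \ge 0$ to drop $w$ on the right, and then multiplying through by the appropriate exponential to return from $\tilde v$ to $v$, gives the claimed inequality with $\kappa$ depending only on the listed parameters.

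The hard part will be verifying that $\kappa$ truly depends only on $N$, $\operatorname{diam}(\Omega)$, $\alpha_0$, $\beta_0$, $d$ and $\theta$ and not on the particular subdomain $U$---this uniformity is precisely what makes the estimate usable when $U$ is later taken to be a varying half-domain in the rotating-plane argument of Section~\ref{sec:rotating-hyperplanes}. Establishing such domain-uniform Krylov--Safonov estimates for strong solutions in $W^{2,1}_{N+1,\mathrm{loc}}$ with merely measurable coefficients satisfying (\ref{coeff}), together with the solvability and boundary regularity of the auxiliary problem defining $w$, are the substantive technical points, and these are the content that the reference \cite{polacik} supplies.
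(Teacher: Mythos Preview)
The paper gives no proof of this lemma; it is quoted as a special case of \cite[Lemma~3.4]{polacik}. Your outline---exponential substitution to make the zeroth-order coefficient nonpositive, a barrier to absorb the negative parabolic-boundary values, then the Krylov--Safonov Harnack inequality together with a Harnack chain over $D$---is essentially the argument in \cite{polacik}, and you correctly single out the domain-uniform dependence of $\kappa$ as the substantive point.

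One refinement is worth making. Your barrier $w$ is obtained by solving the transformed equation on $U$ with prescribed parabolic-boundary data $\tilde v^-$, which presupposes solvability and continuity up to $\overline U$ for an \emph{arbitrary} open $U\subset\Omega$ with merely bounded measurable $a_{ij}$; as you note, this is not free. The auxiliary boundary value problem can be avoided entirely: since $\tilde c\le0$, the constant function $\psi\equiv\sup_{\partial_P}\tilde v^{-}$ is a supersolution, so $V:=\tilde v+\psi$ is a nonnegative \emph{supersolution} on $U\times(\tau,\tau+4\theta)$. One then pairs the weak Harnack inequality for nonnegative supersolutions (applied to $V$) with the local maximum estimate for subsolutions (applied to the solution $\tilde v$ itself, using $\tilde v^+\le V$), and chains across $D$ by overlapping cylinders of radius comparable to $d$ and height comparable to $\theta$. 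This route uses only \emph{interior} Krylov--Safonov estimates and makes the independence of $\kappa$ from the particular choice of $U$ manifest---precisely the uniformity needed when $U$ is a varying half-domain $B(e')$ in Section~\ref{sec:rotating-hyperplanes}.
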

Here $v^+:= \max\{v,0\}$ and $v^-:=-\min \{v,0\}$
denote the usual positive and negative parts of a function
$v$. Moreover, $\partial_P(U\times(\tau,\tau+4\theta))= \overline U
\times \{\tau\} 
\cup \partial U \times (\tau,T)$ denotes the parabolic boundary of $U\times(\tau,\tau+4\theta)$. In the following, we also use the notation 
$$
 \operatorname{inrad}(\Omega):= \sup\{r>0: B_r(x)\subset \Omega
 \text{ for some } x\in \Omega\},
$$
where $B(x,r)=B_r(x)=\{y\in \mathbb R^N : |x-y|<r\}$. 

\begin{theo}(Special case of \cite[Theorem 3.7]{polacik})\\
\label{polacikthm}
Fix $\rho\in(0,\frac{\operatorname{diam}(\Omega)}{2})$.Then there
is
$$
\delta=\delta(N,\operatorname{diam}(\Omega),\alpha_0,\beta_0,\rho)>0
$$
and, for every $d,\theta>0$,
$$
\mu=\mu(N,\operatorname{diam}(\Omega),\alpha_0,\beta_0,d,\theta,\rho)\in(0,1]  
$$
with the following properties: If $D\subset U$ are subdomains of $\Omega$ satisfying 
\begin{align*}
 \operatorname{inrad}&(D)>\rho, \ \ |U\backslash \overline{D}|<\delta,\\
&\operatorname{dist}(\overline{D},\partial U)>d,
\end{align*}
if $v\in C(\overline{U}\times[\tau,\infty))$ is a solution of a problem \eqref{linear1},\eqref{linear2} whose coefficients satisfy \eqref{coeff} (with $T=\infty$), and if
\begin{align}
  &v(x,t) >0 \quad \text{for $(x,t)\in \overline{D}\times [\tau,\tau + 8\theta)$,}\nonumber\\
 &\|v^-(\cdot,\tau)\|_{L^\infty(U\backslash D)} \leq \mu \|v\|_{L^{\infty}(D\times(\tau+\theta,\tau+2\theta)}, \label{p}
\end{align}
then the following statements hold true:
\begin{itemize}
 \item [(S1)] $v(x,t)>0$ for all $(x,t)\in \overline{D}\times [\tau,\infty).$
 \item [(S2)] $\|v^-(x,t)\|_{L^\infty(U)}\to 0$, as $t\to\infty.$
\end{itemize}
\end{theo}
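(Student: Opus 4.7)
The plan is to prove (S1) and (S2) simultaneously by iterating a one-step argument on successive time intervals of length $4\theta$. The geometric picture is the following: as long as $v > 0$ on $\overline{D}$, the negative part $v^-$ is supported in the narrow set $U\setminus\overline{D}$ (because $v=0$ on $\partial U$), whose Lebesgue measure is less than~$\delta$. On such a small set, solutions of linear parabolic equations enjoy a quantitative $L^\infty$-contraction, which can be coupled with the Harnack-type inequality of Lemma~\ref{lemapol} to feed back into itself.

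First I would prove the following contraction-propagation lemma: if $v>0$ on $\overline{D}\times[s,s+4\theta]$, then
$$
\sup_{U\times[s,s+4\theta]} v^- \;\le\; \eta(\delta)\,\|v^-(\cdot,s)\|_{L^\infty(U)},
$$
where $\eta(\delta)\to 0$ as $\delta\to 0$ with dependence only on the structural constants. The reason is that $w:=v^-$ is a nonnegative subsolution of a linear parabolic equation of the same type, its spatial support lies in the thin layer $U\setminus\overline{D}$ of volume less than~$\delta$, and it vanishes on the lateral boundary of this support (both on $\partial U$ and along the free interface $\{v=0\}$). A parabolic Alexandrov--Bakelman--Pucci estimate, or equivalently the maximum principle for small domains in the parabolic setting, then yields the asserted contraction, with $\eta(\delta)$ arbitrarily small for $\delta$ small. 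I then fix $\delta$ so that $\eta(\delta)<\tfrac{1}{2}$.

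Next I combine this with Lemma~\ref{lemapol} applied on $[s,s+4\theta]$. Since $v=0$ on $\partial U$, the parabolic-boundary term there reduces to $\|v^-(\cdot,s)\|_{L^\infty(U)}$, so
$$
\inf_{\overline{D}\times(s+3\theta,s+4\theta)} v \;\ge\; \kappa\,\sup_{\overline{D}\times(s+\theta,s+2\theta)} v \;-\; e^{4m\theta}\,\|v^-(\cdot,s)\|_{L^\infty(U)}.
$$
Choosing $\mu$ small enough (essentially $\mu<\kappa e^{-4m\theta}/2$) makes the right-hand side strictly positive at the first step $s=\tau$, by \eqref{p} and the assumed positivity of $v$ on $\overline{D}\times[\tau,\tau+8\theta)$, and in fact comparable to $\sup_{\overline{D}\times(\tau+\theta,\tau+2\theta)} v$. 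To extend positivity past $\tau+8\theta$ I argue by contradiction: if $T^{*}>\tau+8\theta$ were the first time $v$ fails to be positive on $\overline{D}$, applying Lemma~\ref{lemapol} on $[T^{*}-4\theta,T^{*}]$ with $v^-$ already contracted by all previous iterations would force a strictly positive infimum near $T^{*}$, contradicting $\inf_{\overline{D}} v(\cdot,T^{*})=0$. This yields (S1), and the geometric decay of $\|v^-(\cdot,s)\|_{L^\infty(U)}$ from the contraction lemma yields (S2).

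The hard part will be establishing the quantitative small-volume contraction for $v^-$ with the correct dependence $\eta(\delta)\to 0$ as $\delta\to 0$, together with balancing the two smallness parameters $\delta$ (governing the $v^-$ contraction) and $\mu$ (controlling the initial negative perturbation) so that one full iteration reproduces the starting hypothesis at the next time level. Extracting the boundary term of Lemma~\ref{lemapol} cleanly as $\|v^-(\cdot,s)\|_{L^\infty(U)}$ despite the moving contact set $\{v=0\}$, and tracking the constants $\kappa,\eta(\delta),\mu,m,\theta$ so the bookkeeping closes, is the technical core of Pol\'{a}\v{c}ik's argument.
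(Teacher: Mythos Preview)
The paper does not prove this theorem at all: it is explicitly quoted as a special case of \cite[Theorem~3.7]{polacik} and used as a black box in Section~\ref{sec:rotating-hyperplanes}. So there is no proof in the paper to compare your proposal against.

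That said, your sketch is a reasonable outline of the strategy Pol\'{a}\v{c}ik actually uses in \cite{polacik}: the small-volume parabolic maximum principle (ABP-type) controls $v^-$ on the thin layer $U\setminus\overline D$, Lemma~\ref{lemapol} feeds back positivity on $\overline D$, and an induction over time windows of length $4\theta$ closes the loop. Your description of the ``hard part'' --- balancing $\delta$ and $\mu$ and tracking constants so one full iteration reproduces the hypothesis --- is exactly right, and you correctly identify this as Pol\'{a}\v{c}ik's contribution rather than something new here. If you want to include a proof, you should either cite \cite{polacik} directly (as the paper does) or flesh out the small-domain contraction step carefully, since the assertion that $v^-$ is a subsolution supported in $U\setminus\overline D$ with zero lateral data on the free interface $\{v=0\}$ is the place where details matter and where your sketch is thinnest.
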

 
We now come back to the linear problem satisfied by the
difference between a solution of \eqref{(P)} and its reflection at a
hyperplane. So as before, $B$ denotes a radial subdomain of $\R^N$,
$N\geq 2,$ and $I$, $S$, $H(e)$, $B(e)$ and $\sigma(e)$ are defined as in
the introduction for $e\in S$. Moreover, we let $u$ denote a solution of
\eqref{(P)}, and for $e\in S$ we define 
$$
w_e(x,t):=u(x,t)-u(\sigma_e(x),t)\qquad \text{for $(x,t)\in B(e)\times [0,\infty).$}
$$
Then $w_e$ is a solution of the problem
\begin{equation}\label{Pe}
 \begin{aligned}
 &\partial_t w_e=a^e_{ij}(x,t)(w_e)_{x_i x_j}+b^e_i(x,t)(w_e)_{x_i}+c^e(x,t)w_e, && (x,t)\in B(e)\times (0,\infty),\\
 &w_e(x,t) = 0, && (x,t)\in \partial B(e)\times (0,\infty),\\
 &w_e(x,0) = u_0(x)-u_0(\sigma_e(x)), && x\in B(e),\\
  \end{aligned}
\end{equation}
where the coefficients are obtained,
as in \cite{polacik}, via the Hadamard formula. To make this precise,
let $u^e(x,t):=u(\sigma_e(x),t)$ and consider
\begin{align*}
c^e(x,t)&:=
\begin{cases}
\int_0^1 F_u(t,|x|,su+(1-s)u^e,D u,D^2 u))ds, &\text{ if $u(x,t) \neq u^e(x,t),$ }\\
0, &\text{ if $u(x,t) = u^e(x,t),$ }
\end{cases}\\
b^e_i(x,t)&:=
\begin{cases}
\int_0^1 F_{p_i}(t,|x|,u^e, \ldots, u^e_{x_{i-1}}, su_{x_{i}}\\
\hspace{1cm}+(1-s)u_{x_{i}}^e,u_{x_{i+1}},\ldots,D^2 u))ds, &\text{ if $u_{x_{i}}(x,t) \neq u_{x_{i}}^e(x,t),$ }\\
0, &\text{ if $u_{x_{i}}(x,t) = u_{x_{i}}^e(x,t),$ }
\end{cases}\\
a_{ij}^e(x,t)&:=\int_0^1 F_{q_{ij}}(t,|x|,u^e,Du^e, \ldots, u^e_{x_{i^-}x_{j^-}}, su_{x_{i}x_j}\\
&\hspace{2cm}+(1-s)u_{x_{i}x_j}^e,u_{x_{i^+}x_{j^+}},\ldots,u_{x_N x_N}))ds,
\end{align*}
where $(i^-,j^-),(i^+,j^+)$ stand for the pairs of indices preceding,
respectively, following, $(i,j)$ within a fixed identification of $\R^{N\times N}$
 with $\mathbb R^{N^2}.$ 

 By (F1) and (F2) the integrals make sense
and give the right quotients for the right hand side of \eqref{Pe} to
be equal to the difference of $F(t,|x|,u,Du,D^2u)$ and
$F(t,|x|,u^e,Du^e,D^2u^e).$  

For every $z \in \omega(u)$, let
\begin{align*}
z_e \in C_0(\overline {B(e)}),\qquad z_e(x):= z(x)- z(\sigma_e (x)) \text{ for } x\in B(e).
\end{align*}
 Finally we define the set 
\begin{equation}
{\cal M}:=\{e\in S\mid z_e(x)\geq 0 \text{ for all } x\in B(e) \text{ and } z\in \omega(u) \}.\label{Mdef}   
\end{equation}
We remark that, as a consequence of (F2) and (F4), there is $\beta_0>0$ such that 
\begin{equation}
\label{beta0}
|c^e(x,t)|,|b^e_i(x,t)|,|a_{ij}^e(x,t)|<\beta_0 \qquad \text{and}\qquad a_{ij}(x,t)\xi_i\xi_j \geq \alpha_0 |\xi|^2
\end{equation}
for all $(x,t)\in B(e)\times
[0,\infty), i,j=1,\dots,N,\xi \in \R^N$ and $e\in S$ with $\alpha_0>0$ as in (F4). 

\section{Proof of the main result}
\label{sec:rotating-hyperplanes}
As before, $B$ denotes a radial subdomain of $\R^N$,
$N\geq 2,$ and $I$, $S$, $H(e)$, $B(e)$ and $\sigma(e)$ are defined as in
the introduction for $e\in S$. Moreover, for a fixed solution $u$ of
(\ref{(P)}) satisfying the assumptions of Theorem~\ref{teodirichlet},
we will make use of the definitions introduced in
Section~\ref{sec:line-parab-probl}. Recall, in particular,  the definition of $\cal M$ in \eqref{Mdef}.

\begin{lemma}\label{teopohob}
 Let $e\in S$ be as in (U1). Then there is some $\varepsilon>0$ such that $e'\in {\cal M}$ for all $e'\in S$ with $|e'-e|<\varepsilon.$
\end{lemma}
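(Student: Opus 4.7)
The plan is to apply Theorem~\ref{polacikthm} to the linear equation \eqref{Pe} satisfied by $w_{e'}$ on the half-domain $U=B(e')$, for every $e'$ sufficiently close to $e$. If the hypotheses can be verified uniformly for $e'$ in a neighbourhood of $e$, the conclusion (S2) gives $\|w_{e'}^-(\cdot,t)\|_{L^\infty(B(e'))}\to 0$ as $t\to\infty$. Then, given any $z\in\omega(u)$ with $u(\cdot,t_n)\to z$ uniformly along some sequence $t_n\to\infty$, one obtains $z_{e'}(x)=\lim_n w_{e'}(x,t_n)\ge 0$ for all $x\in B(e')$, which is exactly the condition $e'\in{\cal M}$.

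To prepare the setup, first note that by (U1) the initial datum $w_e(\cdot,0)=u_0-u_0\circ\sigma_e$ is nonnegative and not identically zero in $B(e)$. Since $w_e$ solves \eqref{Pe} with coefficients controlled by \eqref{beta0}, the parabolic strong maximum principle yields $w_e(x,t)>0$ on $B(e)\times(0,\infty)$. Fix now $\rho\in(0,\operatorname{diam}(B)/2)$ and $d,\theta>0$, and let $\delta,\mu$ be the constants produced by Theorem~\ref{polacikthm} with $\Omega=B$. Choose a subdomain $D\subset\subset B(e)$ with $\operatorname{inrad}(D)>\rho$, $|B(e)\setminus\overline D|<\delta/2$, and $\operatorname{dist}(\overline D,\partial B\cup H(e))>2d$; by compactness, $\eta_0:=\min_{\overline D\times[1,1+8\theta]}w_e>0$. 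Because the symmetric difference $B(e)\triangle B(e')$ has measure tending to zero as $e'\to e$, and $H(e')\to H(e)$, it follows that for $|e'-e|$ small enough $D\subset B(e')$, $|B(e')\setminus\overline D|<\delta$, and $\operatorname{dist}(\overline D,\partial B(e'))>d$, so the geometric hypotheses of Theorem~\ref{polacikthm} are met.

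The main obstacle is the pointwise smallness condition \eqref{p}, which I check at $\tau=1$. Combining \eqref{hoeldercontinuos} on $B\times[1,\infty)$ with the continuity of $u$ on the compact set $\overline B\times[0,1]$, the map $x\mapsto u(x,t)$ is uniformly continuous on $B$, uniformly in $t\ge 0$. Since $w_{e'}-w_e=u\circ\sigma_e-u\circ\sigma_{e'}$ and $\sigma_{e'}\to\sigma_e$ uniformly on $B$, this forces
\[
\sup_{(x,t)\in B\times[0,\infty)}|w_{e'}(x,t)-w_e(x,t)|\xrightarrow[e'\to e]{}0.
\]
In particular $w_{e'}>\eta_0/2$ on $\overline D\times[1,1+8\theta]$, so the right-hand side of \eqref{p} is bounded below by $\mu\eta_0/2$. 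For the left-hand side, split $B(e')\setminus\overline D$ into $(B(e')\cap B(e))\setminus\overline D\subset B(e)\setminus\overline D$, where $w_e(\cdot,1)\ge 0$ gives $w_{e'}^-(\cdot,1)\le|w_{e'}-w_e|$, and the thin strip $B(e')\setminus B(e)$. Any point $x$ in this strip satisfies $0<x\cdot e'\le\operatorname{diam}(B)|e'-e|$, hence $|x-\sigma_{e'}(x)|=2|x\cdot e'|\to 0$, and uniform continuity of $u$ gives $|w_{e'}(x,1)|\le\omega_u(2\operatorname{diam}(B)|e'-e|)$, where $\omega_u$ is the modulus of continuity. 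Both contributions are therefore smaller than $\mu\eta_0/2$ for $e'$ close enough to $e$, so \eqref{p} holds.

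With all hypotheses in place, Theorem~\ref{polacikthm} applied to $v=w_{e'}$ yields (S2), and the argument outlined in the first paragraph concludes that $e'\in{\cal M}$ whenever $|e'-e|<\varepsilon$ for some $\varepsilon>0$.
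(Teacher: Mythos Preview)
Your proof is correct and follows essentially the same route as the paper's: apply the strong maximum principle to get $w_e>0$ on $B(e)\times(0,\infty)$, choose an inner domain $D\subset\subset B(e)$ adapted to the constants of Theorem~\ref{polacikthm}, perturb to nearby $e'$, verify the geometric and smallness hypotheses, and invoke (S2). Your treatment of the smallness condition~\eqref{p} is in fact more explicit than the paper's (which simply appeals to ``continuity and \eqref{star}''): you split $B(e')\setminus\overline D$ into the part inside $B(e)$, where $w_e(\cdot,1)\ge 0$ controls $w_{e'}^-$, and the thin strip $B(e')\setminus B(e)$, where $|x-\sigma_{e'}(x)|$ is small. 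One minor wrinkle: you fix $\rho$ and $d$ before choosing $D$, whereas the existence of $D$ with $\operatorname{inrad}(D)>\rho$, $|B(e)\setminus\overline D|<\delta/2$ and $\operatorname{dist}(\overline D,\partial B(e))>2d$ requires $\rho$ and $d$ to be sufficiently small; the paper avoids this by choosing $D$ first and then setting $d=\tfrac12\operatorname{dist}(\overline D,\partial B(e))$ (recall $\delta$ does not depend on $d$). This is a cosmetic reordering, not a genuine gap.
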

\begin{proof} If $e\in S$ is as in (U1), then it follows from
  (\ref{Pe}) and the parabolic strong maximum principle (see for example \cite{protter}) that
\begin{equation}\label{star}
w_e(x,t)>0 \text{ in } B(e)\times(0,\infty),
\end{equation}
and therefore $e\in{\cal M}.$  Let $\delta>0$ be 
chosen as in Theorem \ref{polacikthm} corresponding to $\Omega=B$,
$\rho:=\frac{\operatorname{inrad}(B)}{4}$, and $\alpha_0$, $\beta_0$ as in
(\ref{beta0}). Moreover, let $D\subset\subset B(e)$ be a subdomain such
that 
$|B(e)\backslash D|<\delta$ and
$\operatorname{inrad}(D)>\rho$. Put
$d:=\frac{\operatorname{dist}(\overline{D},\partial B(e))}{2}$,
$\theta:=1$, and let $\mu \in (0,1]$ be as in Theorem \ref{polacikthm}
(corresponding to these choices of $\Omega,\alpha_0,\beta_0,d,\theta$
and $\rho$). By \eqref{star} there exists some $\eta>0$ such that
$$
 w_e(x,t)>\eta>0, \ \ (x,t)\in \overline{D}\times [1, 9].
$$
Moreover, there is some
$\varepsilon>0$ such that for all  
$e'\in S$ with $|e-e'|<\varepsilon$ we have 
$$
D\subset\subset B(e'),\qquad |B(e')\backslash D|<\delta, \qquad
\operatorname{dist}(\overline{D},\partial B(e'))>d  
$$
and, as a consequence of continuity and \eqref{star}, 
\begin{align*}
&w_{e'}(x,t)>\frac{\eta}{2}>0, \ \ (x,t)\in \overline{D}\times [1, 9],\\
&\|w_{e'}^-(\cdot,1)\|_{L^\infty(B(e'))}\leq \frac{\mu\eta}{2}\leq \mu \|w_{e'}\|_{L^\infty(D\times [2,3])}.
\end{align*}
Hence for these $e' \in S$ the hypothesis of Theorem \ref{polacikthm}
are satisfied with $U=B(e')$, $\tau=1$, $\theta=1$ and $D$ as above, and thus we get that
 \begin{align*}
\|w_{e'}^-(\cdot,t)\|_{L^\infty(B(e'))}\to 0, \text{ as } t\to\infty.
\end{align*}
This shows $e' \in {\cal M}$ for $e' \in S$ with $|e-e'|<\varepsilon$, as claimed.
\end{proof}

\begin{lemma}\label{lobolemab}
Let $e\in {\cal M}.$ If there is some $\tilde z\in\omega(u)$ such that $\tilde z_e\not\equiv 0,$ then there is some $\varepsilon>0$ such that $e'\in{\cal M}$ for all $e'\in S$ with $|e-e'|<\varepsilon.$
\end{lemma}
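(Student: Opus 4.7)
The plan is to adapt the proof of Lemma~\ref{teopohob}: we need to locate a subdomain $D\subset\subset B(e)$ and a time $\tau$ on which $w_e$ is uniformly positive on the parabolic cylinder $\overline D\times[\tau,\tau+8]$, which plays the role of the initial strong-maximum-principle positivity used in Lemma~\ref{teopohob}. Once such $D$ and $\tau$ are in hand, the remainder of the argument carries over essentially verbatim.

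To locate $D$ and $\tau$, pick $t_n\to\infty$ with $u(\cdot,t_n)\to\tilde z$ in $L^\infty(B)$. Since $\tilde z_e\in C_0(\overline{B(e)})$ is nonnegative (as $e\in{\cal M}$) and not identically zero, there exist $x_0\in B(e)$, $r>0$ and $c>0$ with $\overline{B_r(x_0)}\subset B(e)$ and $\tilde z_e\geq 2c$ on $\overline{B_r(x_0)}$. The H\"older bound \eqref{hoeldercontinuos} makes the translated solutions $u_n(x,t):=u(x,t+t_n)$ equicontinuous on bounded time intervals, so by Arzel\`a--Ascoli and a diagonal extraction, a subsequence converges locally uniformly to some $\bar u$ on $\overline B\times\mathbb R$ with $\bar u(\cdot,t)\in\omega(u)$ for every $t\in\mathbb R$. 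The function $\bar w(x,t):=\bar u(x,t)-\bar u(\sigma_e x,t)$ is then nonnegative on $B(e)\times\mathbb R$ (by $e\in{\cal M}$) and satisfies $\bar w(\cdot,0)=\tilde z_e\not\equiv 0$.

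The shifted functions $w_{e,n}(x,t):=w_e(x,t+t_n)$ solve \eqref{Pe} with time-shifted coefficients obeying the uniform bounds \eqref{beta0}. Standard $W^{2,1}_{p,\mathrm{loc}}$ parabolic estimates together with weak-$*$ compactness of the coefficient sequence in $L^\infty$ exhibit $\bar w$, along a further subsequence, as a nonnegative strong solution of a linear uniformly parabolic equation on $B(e)\times\mathbb R$, and the parabolic strong maximum principle then yields $\bar w>0$ on $B(e)\times(0,\infty)$. Fix $\rho,\delta,d,\mu$ and $\theta=1$ and choose $D\subset\subset B(e)$ exactly as in Lemma~\ref{teopohob}, with the extra requirement $\overline{B_r(x_0)}\subset D$; then $\bar w\geq\eta>0$ on $\overline D\times[1,9]$ for some $\eta>0$. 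By locally uniform convergence $w_{e,n}\to\bar w$, together with the decay $\|w_e^-(\cdot,t)\|_{L^\infty(B(e))}\to 0$ (which follows from $e\in{\cal M}$ and compactness of $\omega(u)$), for $n$ sufficiently large we have $w_e\geq\eta/2$ on $\overline D\times[t_n+1,t_n+9]$ and $\|w_e^-(\cdot,t_n+1)\|_{L^\infty(B(e)\setminus D)}\leq\mu\eta/2$---precisely the hypotheses of Theorem~\ref{polacikthm} applied to $w_e$ with $U=B(e)$ and $\tau=t_n+1$.

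With the parabolic cylinder of positivity $\overline D\times[t_n+1,t_n+9]$ in hand, the step to $e'$ close to $e$ is identical to that in Lemma~\ref{teopohob}: for $\varepsilon>0$ small, every $e'\in S$ with $|e-e'|<\varepsilon$ still satisfies $D\subset\subset B(e')$, $|B(e')\setminus D|<\delta$ and $\operatorname{dist}(\overline D,\partial B(e'))>d$; the H\"older bound \eqref{hoeldercontinuos} and the continuous dependence of $\sigma_{e'}$ on $e'$ then give $w_{e'}\geq\eta/4$ on $\overline D\times[t_n+1,t_n+9]$ together with $\|w_{e'}^-(\cdot,t_n+1)\|_{L^\infty(B(e')\setminus D)}\leq\mu\eta/4$; and Theorem~\ref{polacikthm} applied to $w_{e'}$ gives $\|w_{e'}^-(\cdot,t)\|_{L^\infty(B(e'))}\to 0$, i.e.\ $e'\in{\cal M}$. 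The principal difficulty is the limit-passage step: one must carry out the compactness argument for the $w_{e,n}$ carefully enough that the limit $\bar w$ inherits a linear parabolic structure to which the strong maximum principle applies, despite the fact that the coefficients of \eqref{Pe} depend on $u$ and its derivatives and are only bounded measurable.
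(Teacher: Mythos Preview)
Your overall strategy---locate a parabolic cylinder $\overline D\times[\tau,\tau+8\theta)$ on which $w_e$ is uniformly positive, then perturb to nearby $e'$ and invoke Theorem~\ref{polacikthm}---is correct and matches the paper's. The difference lies in \emph{how} you obtain this cylinder of positivity, and there your argument has a genuine gap.

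You want to pass to a limit $\bar w$ along the time shifts $w_{e,n}$ and apply the strong maximum principle to $\bar w$. For this you claim that ``standard $W^{2,1}_{p,\mathrm{loc}}$ parabolic estimates together with weak-$*$ compactness of the coefficient sequence'' exhibit $\bar w$ as a strong solution of some linear equation. This does not follow. First, interior $W^{2,1}_p$ estimates for linear non-divergence equations require more than bounded measurable coefficients (e.g.\ VMO in $x$), and here the coefficients $a^e_{ij}$ involve $D^2u$ through $F_{q_{ij}}$, with no regularity beyond boundedness assumed. Second, even if $w_{e,n}$ were bounded in $W^{2,1}_{p,\mathrm{loc}}$, weak-$*$ convergence of the coefficients and weak convergence of $D^2 w_{e,n}$ do not let you pass to the limit in the product $a^{e,n}_{ij}\,(w_{e,n})_{x_ix_j}$. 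You flag this yourself as the ``principal difficulty'', and indeed it is; as written, the step is not justified in the fully nonlinear setting of Section~\ref{sec:framework}.

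The paper sidesteps the limit passage entirely by applying the Harnack-type estimate of Lemma~\ref{lemapol} \emph{directly to $w_e$}, which is a genuine solution of \eqref{Pe}. Since $e\in\cM$ forces $\|w_e^-(\cdot,t)\|_{L^\infty(B(e))}\to 0$, one can choose $T_0$ so that the boundary term in Lemma~\ref{lemapol} is as small as desired; picking $T_1\ge T_0+1$ with $w_e(x_0,T_1)>\alpha$ (from $u(\cdot,T_1)\to\tilde z$) then gives, via Lemma~\ref{lemapol}, a uniform lower bound $\inf_{D\times(\tau,\tau+1)}w_e\ge\eta>0$ for $\tau=T_1+2$. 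This single application of Harnack replaces your entire compactness/limit/strong-MP machinery. If you wish to salvage your route, the cleanest fix is to apply Lemma~\ref{lemapol} to each $w_{e,n}$ (which are bona fide solutions) and pass the resulting numerical inequality to the limit; but at that point you have essentially reproduced the paper's direct argument.
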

\begin{proof} Since $\tilde z_e\not\equiv 0$ there is some
  $\alpha>0$ and $x_0\in B(e)$ such that $\tilde z_e (x_0)\geq
  2\alpha>0.$  Let $\delta>0$ be 
chosen as in Theorem \ref{polacikthm} corresponding to $\Omega=B$,
$\rho:=\frac{\operatorname{inrad}(B)}{4}$, and $\alpha_0$, $\beta_0$ as in
(\ref{beta0}). Moreover, let $D\subset\subset B(e)$ be a subdomain such
that $|B(e)\backslash D|<\delta$,
$\operatorname{inrad}(D)>\rho$ and $x_0 \in D$. Put
$d:=\frac{\operatorname{dist}(\overline{D},\partial B(e))}{2}$,
$\theta:=\frac{1}{8}$, and let $\mu \in (0,1]$ be as in Theorem \ref{polacikthm}
(corresponding to these choices of $\Omega,\alpha_0,\beta_0,d,\theta$
and $\rho$). Since $z_e\geq 0$ in $B(e)$ for all $z\in\omega(u)$ and, as remarked at
the end of Section~\ref{sec:framework}, $\dist(u(\cdot,t),\omega(u))
\to 0$ in $C_0(\overline B)$ as $t \to \infty$, there is some $T_0>0$ such that
\begin{equation}
  \label{eq:6}
 \| w^-_{e}(\cdot,t)\|_{L^\infty(B(e))}<\frac{\mu\kappa
   \alpha}{8}e^{-4\beta_0} \qquad \text{for $t\ge T_0,$}
\end{equation}
where $\kappa>0$ is the constant given by Lemma \ref{lemapol} for
$\Omega,\alpha_0,\beta_0,d$ as above and $\theta=1$. Next, we may pick $T_1 \ge T_0+1$ such that $\|w_e(\cdot,T_1)-\tilde
z_e\|_{L^\infty(B(e))}<\alpha$ and therefore $w_e(x_0,T_1)>\alpha$. We then apply
Lemma \ref{lemapol} to
$U=B(e)$, $\tau:=T_1+2$ and $\theta=1$ in order to get
\begin{align*}
 \inf_{D\times (\tau,\tau+1)}  w_e&\geq \kappa \|
 w_e^+\|_{L^\infty(D\times(\tau-2,\tau-1))}-e^{4\beta_0} \sup_{\partial_P(B(e)
   \times(\tau-3,\tau+1))}  w_e^-\\
&\geq \kappa\alpha-\frac{\mu\kappa \alpha}{8}\geq\frac{\kappa \alpha}{2}=:\eta>0.
\end{align*}
Moreover, there is some
$\varepsilon>0$ such that for all  
$e'\in S$ with $|e-e'|<\varepsilon$ we have 
$$
D\subset\subset B(e'),\qquad |B(e')\backslash D|<\delta, \qquad
\operatorname{dist}(\overline{D},\partial B(e'))>d  
$$
and, by continuity,
$$
\inf_{D\times (\tau,\tau+1)}  w_{e'} \ge \frac{\eta}{2}\qquad
\text{and}\qquad 
\|w^-_{e'}(\cdot,\tau)\|_{L^\infty(B(e'))}\le 
\|w^-_{e}(\cdot,\tau)\|_{L^\infty(B(e))}+\frac{\eta \mu}{4}.  
$$
Combining this with (\ref{eq:6}), we find that 
$$
\|w^-_{e'}(\cdot,\tau)\|_{L^\infty(B(e'))} \le
\frac{\eta\mu}{4}+\frac{\mu\kappa \alpha}{8}e^{-4\beta_0 } \le
\frac{\mu \eta}{2} \leq \mu \|  w^+_{e'}\|_{L^\infty({D\times (\tau+\frac{1}{8},\tau+\frac{1}{4})})}
$$
for every $e' \in S$ with $|e-e'|<\varepsilon$. In particular, for
these $e' \in S$ the hypothesis of Theorem \ref{polacikthm} are
satisfied with $U=B(e')$ and $\theta=\frac{1}{8}$, and therefore  
$$
 \|  w^-_{e'}(x,t)\|_{L^\infty(B(e'))}\to 0 \quad \text{as $t\to\infty$.}
$$
This yields $e'\in{\cal M}$ for all $e'\in S$ with $|e-e'|<\varepsilon,$
as claimed.
\end{proof}

We are now ready to prove the main symmetry result.

\begin{proof}[Proof of Theorem \ref{teodirichlet}] 
 
 Let $e\in S$ be as in (U1). Then, by Lemma \ref{teopohob}, there is $\varepsilon>0$ such that 
 \begin{equation}
e'\in {\cal M} \qquad \text{for all $e' \in S$ with $|e'-e|<\varepsilon.$} \label{Mopen}   
 \end{equation}
 Let $P$ be any two dimensional subspace of $\R^N$ containing $e$. Without loss of generality, we may assume that $e=(1,0,\ldots,0)$ and $ P = \{ x=(x_1,0,...,0,x_N) \mid x_1 , x_N \in \mathbb R \}.$  Define 
$$
e_\theta:=(\cos(\theta),0,...,0,\sin(\theta)), \qquad  z_{\theta}:=
z_{e_\theta} \in \cC_0(B(e_\theta))
$$
for $\theta \in \R$ and 
\begin{align*}
\Theta_1&:=\sup\{\theta>0 \::\: e_\phi\in {\cal M} \text{ for all } 0\leq \phi\leq \theta \},\\
 \Theta_2&:=\inf\{\theta<0 \::\: e_\phi\in {\cal M} \text{ for all } \theta \leq \phi\leq 0 \}.
   \end{align*}
We note that $\Theta_2<0<\Theta_1$ by
\eqref{Mopen}. If $\Theta_1-\Theta_2 \ge 2\pi$ (and in particular if
$\Theta_1= \infty$ or $\Theta_2=-\infty$), it immediately follows from the definition of ${\cal M}$ that every $H(e_\theta)$, $\theta \in \R,$ is a symmetry hyperplane for
all the elements in $\omega(u)$.  If both $\Theta_1$ and $\Theta_2$
are finite and $\Theta_1-\Theta_2<2\pi$, we have $z_{\Theta_1}\equiv z_{\Theta_2}\equiv 0$ for all
$z\in\omega(u)$ as a consequence of Lemma~\ref{lobolemab}, so that
$H(e_{\Theta_1})$ and $H(e_{\Theta_2})$ are symmetry hyperplanes for
all the elements in $\omega(u).$ Moreover, $e_{\Theta_1}\neq e_{\Theta_2}$ and
$e_\varphi\in {\cal M}$ for all $\varphi\in(\Theta_2,\Theta_1)$. Since
this can be done for all two dimensional subspaces $P$ of $\R^N$
containing $e$, we can use Proposition ~\ref{sec:char-foli-schw-1}, applied to ${\cU}= \omega(u)$, to
obtain the existence of $p \in S$ such that every $z \in \omega(u)$ is
foliated Schwarz symmetric with respect to $p$, as claimed.
\end{proof}

\section{Acknowledgements}
This work is part of a current Ph.D project supported by a joint grant
from CONACyT (Consejo Nacional de Ciencia y Tecnolog\'{\i}a - Mexico)
and DAAD (Deutscher Akademischer Austausch Dienst - Germany). The
authors would like to thank Nils Ackermann for helpful comments and the referee for his/her valuable
remarks and the suggestion to use Lemma~\ref{lemma1}.

\renewcommand{\sectionmark}[1]{\markboth{\MakeUppercase{#1}}{}}
\renewcommand{\sectionmark}[1]{\markright{\MakeUppercase{#1}}{}}

\end{document}